\documentclass[12pt,a4paper,oneside]{amsart}

\usepackage{subcaption}

\usepackage[margin=1in]{geometry}
\usepackage{amsfonts,amsmath,amssymb,amsthm,enumitem}
\usepackage{mathtools}
\usepackage{graphicx}
\usepackage{doi}
\usepackage{tikz}
\usetikzlibrary{quotes,angles,positioning, shapes.geometric, arrows.meta, calc}
\usepackage{hyperref}
\hypersetup{hypertexnames=false}

\numberwithin{equation}{section}

\newtheorem{theorem}{Theorem}[section]
\newtheorem{lemma}[theorem]{Lemma}

\newtheorem{conjecture}[theorem]{Conjecture}

\newcommand{\R}{\mathbb{R}}
\newcommand{\PP}{\mathbb{P}}
\newcommand{\EE}{\mathbb{E}}
\renewcommand{\epsilon}{\varepsilon}

\newcommand{\cube}{[0,1]^d}
\newcommand{\cC}{\mathcal{C}}

\usepackage{calc}
\usepackage{ragged2e}
\usepackage{setspace}
\newcommand{\FundingLogos}{%
  \raisebox{0pt}{\includegraphics[height=1.5cm]{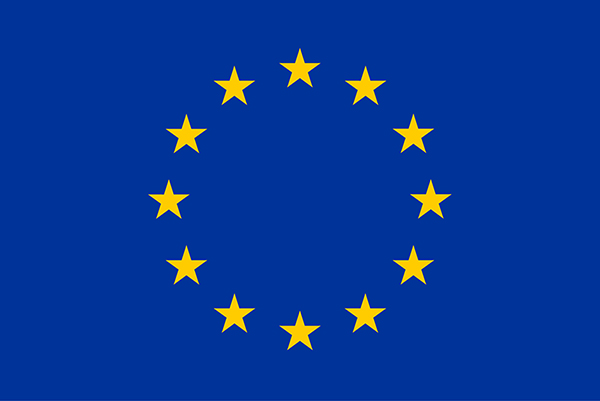}}%
  \hspace{1em}%
  \raisebox{0pt}{\includegraphics[height=1.5cm]{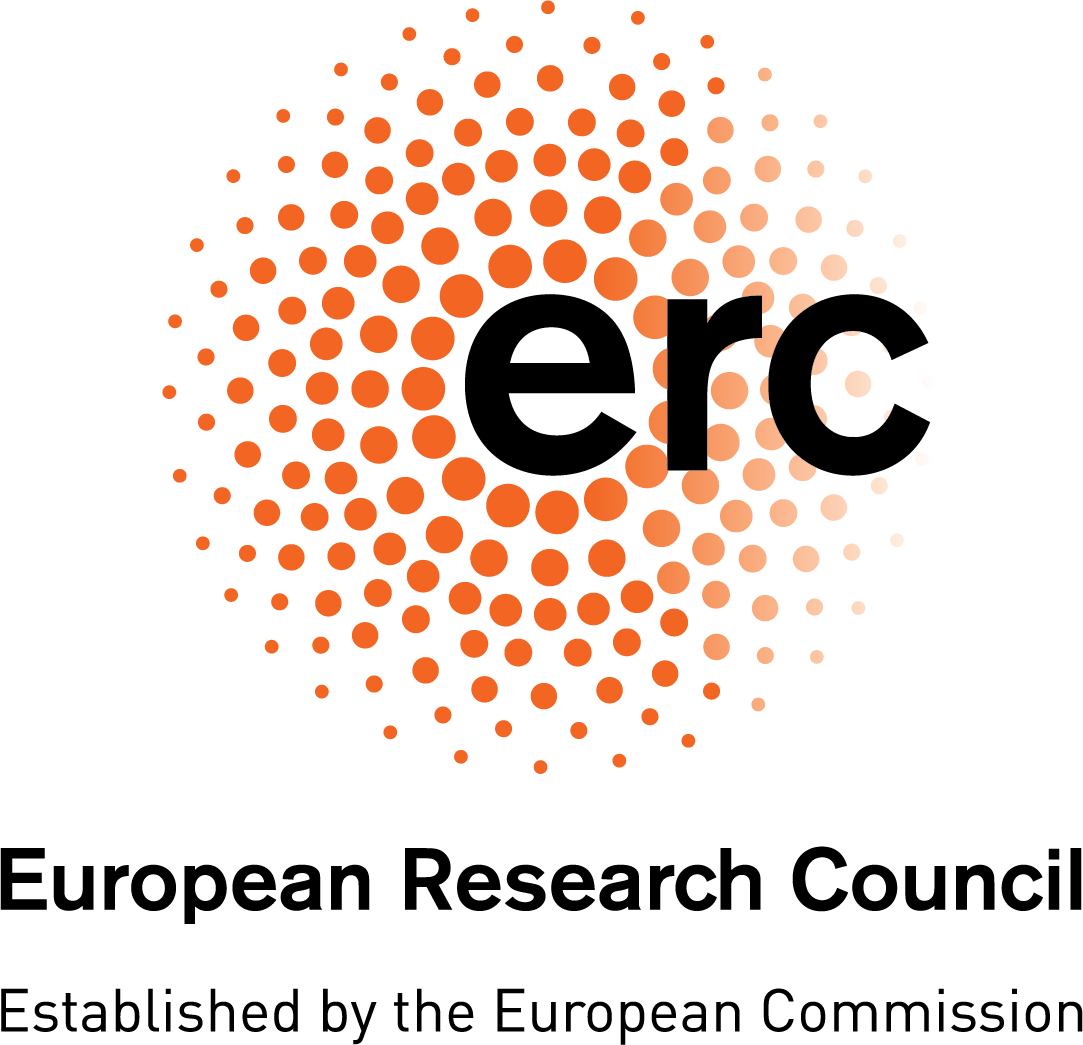}}%
}

\usepackage{tcolorbox}
\tcbset{colback=white,boxrule=0.5pt,arc=2pt,left=4pt,right=4pt,top=4pt,bottom=4pt}

\begin{document}

\title[Concentration for random Euclidean combinatorial optimization]{Concentration for random Euclidean combinatorial optimization}

\author{
Matteo D'Achille
\address[Matteo D'Achille]{Institut \'Elie Cartan de Lorraine, CNRS, Universit\'e de Lorraine, F-57070, Metz, France}
\email{matteo.d-achille@univ-lorraine.fr}
\hspace*{0.5cm}
Francesco Mattesini
\address[Francesco Mattesini]{Department of Mathematics, CIT School, Technical University of Munich \& Munich Center for Machine Learning (MCML), Munich, Germany}
\email{francesco.mattesini@tum.de}
\hspace*{0.5cm}
Dario Trevisan
\address[Dario Trevisan]{Universit\`a di Pisa, Italy}
\email{dario.trevisan@unipi.it}
}

\begin{abstract}
We prove concentration bounds for random Euclidean combinatorial optimization
problems with $p$--costs.  For bipartite matching in dimension $d\ge3$, our
new result concerns the range $d\le p<d^2/2$, extending earlier concentration
results for $p<d$.  The argument combines a tensorized Poincar\'e inequality
with a deterministic local edge-to-energy estimate yielding power-law control
of maximal optimal edges.  We give analogous complete arguments for mono- and
bipartite Euclidean traveling salesperson problems, and formulate a
conjectural $p\!\to\!q$ transfer principle whose validity would remove the
present restriction on $p$.
\end{abstract}

\maketitle

\section{Introduction and overview}
\label{sec:intro}

We study concentration properties for random Euclidean optimization problems built on i.i.d.\ uniform point clouds in the cube $\cube$ (with Euclidean distance).
Working in $\cube$ keeps the presentation simple.  We expect analogous
arguments to extend to sufficiently regular domains or compact manifolds,
provided one has the required local volume bounds, geodesic geometry, and
Poincar\'e inequality.

Two prototypical examples are:
\begin{itemize}[leftmargin=2.5em]
\item 
{
(Random Euclidean) Bipartite matching.\footnote{
{
We use the term \emph{random Euclidean bipartite matching}, in line with the probabilistic literature on Euclidean models \cite{Sicuro17thesis,GoldTrev24ran}. Note that in statistical physics and combinatorics the more common terminology is \emph{random assignment problem}, see e.g.\ \cite{MezardParisi88,Aldous01}, for the same model with general (not necessarily Euclidean) i.i.d.\ costs.} 
}
Two independent point clouds $\mathbf X=(X_i)_{i=1}^n$ and $\mathbf Y=(Y_j)_{j=1}^n$ are matched by a permutation $\sigma$, minimising the sum of $p$--powers of distances, where $p\ge 1$ is a parameter.
}
\item Euclidean traveling salesperson problem (TSP). A Hamiltonian cycle is chosen on one cloud (monopartite TSP), or on two clouds with an alternation constraint (bipartite TSP), minimising the sum of $p$--powers of edge lengths.
\end{itemize}

In all these models, the disorder is geometric (the random points) and the
optimiser selects a low-energy combinatorial structure.  In the dimensions
considered below, the natural energy scale is $n^{1-p/d}$, and our goal is to
quantify sample-to-sample fluctuations relative to that scale.

\subsection{Main results and proof strategy (informal)}
Our results show that, in dimension $d\ge 3$ and for $1\le p<d^2/2$, the optimal costs for matching and TSP are \emph{self-averaging} at the energy scale: with polynomially high probability,
\[
|\, \cC -\EE[\cC]\,| \ll n^{1-p/d},
\]
where $\cC$ denotes the optimal cost. Precise statements are given in Theorems~\ref{thm:conc_matching}, \ref{thm:conc_tsp}, and \ref{thm:conc_bTSP}. The argument consists of two main layers.

For bipartite matching, the range $p<d$ is essentially covered by the
general framework of \cite[Proposition~5.3]{GoldTrev24ran}.  The new content
of Theorem~\ref{thm:conc_matching} is therefore {the half-open interval}
\[
d\le p<d^2/2,
\]
obtained through the maximal-edge estimate developed below.

\paragraph{(A) \emph{Analytic concentration tool: Poincar\'e inequality.} }
Let $\mu$ be the uniform probability measure on $(\cube)^n$ (monopartite problems) or on $(\cube)^{2n}$ (two-cloud problems).
The tensorized Poincar\'e inequality gives, for every locally Lipschitz $F$,
\begin{equation}\label{eq:poincare_intro}
\mathrm{Var}_\mu(F)\le C_{\mathrm P}\,\EE_\mu\big[|\nabla F|^2\big].
\end{equation}
Applying \eqref{eq:poincare_intro} to the optimal cost  $\cC$ reduces concentration to bounding $|\nabla \cC|$.
At Lebesgue-a.e.~configuration of the input points, the optimal-cost
functional, viewed as a locally Lipschitz function on $(\cube)^n$ or
$(\cube)^{2n}$, is differentiable by Rademacher's theorem{, see e.g.~\cite[Theorem 10.8]{VilOandN}}.  At such points one
has, when $p\ge2$, gradient bounds of the form
\begin{equation}\label{eq:grad_scheme_intro}
|\nabla \cC |^2 \;\lesssim\; \sum_e |e|^{2(p-1)}
\;\le\; \cC \cdot \big(\sup_e |e|\big)^{p-2},
\end{equation}
where the sum and supremum are over the edges selected by an optimiser.

\paragraph{(B) \emph{ Geometric input: excluding long edges by local stability.} }
The key difficulty is to control the maximal edge length $\sup_e |e|$.
The mechanism is:
\begin{enumerate}[leftmargin=2.5em]
\item a \emph{mesoscopic diffuseness} event, ensuring that every ball of radius $\gtrsim n^{-\alpha/d}$ contains many points (with very high probability);
\item a \emph{local optimality} condition: for matching, a two-point swap
exchanges the two assigned partners; for TSP, a $2$--opt move removes two tour
edges and reconnects the resulting paths to obtain another Hamiltonian cycle;
\item a deterministic lemma converting $2$--opt into a \emph{local edge-to-energy} inequality (that we prove in detail for bipartite matching; see Lemma~\ref{lem:l_infty_to_loc}).
\end{enumerate}
Combining these ingredients shows that, for every
$\alpha<p/(p+d)$,
\[
\sup_e |e|\lesssim_\alpha n^{-\alpha/d}
\]
with arbitrarily high polynomial probability.  Equivalently, the endpoint
exponent is $n^{-p/(d(p+d))+o(1)}$.  This closes the Poincar\'e estimate.
The exponent $p<d^2/2$ is exactly the condition that the resulting polynomial tail exponent is positive.

\subsection{A conjectural \texorpdfstring{$p\!\to\!q$}{p → q} transfer principle}
The exploratory simulations in Section~\ref{sec:numerics} do not test the
fluctuation exponent, but they motivate asking whether the present range
$p<d^2/2$ is intrinsic. A natural strengthening would be to control not only
the maximal edge, but higher moments of the edge lengths under the
$p$--optimiser.

\begin{conjecture}[$p\!\to\!q$ transfer for the $p$--optimal matching]\label{conj:ptoq}
Fix $d\ge 3$ and $p\ge 1$. For every $q>p$ there exists $c=c(d,p,q)<\infty$ such that for every
$p$--optimal matching $\sigma_p$,
\[
\EE\Big[\sum_{i=1}^n |X_i-Y_{\sigma_p(i)}|^{q}\Big]
\;\le\; c\, n^{1-q/d}.
\]
\end{conjecture}

Conjecture~\ref{conj:ptoq} matches the heuristic that the $p$--optimiser still pairs points at the typical matching scale $n^{-1/d}$, hence has $q$--energy of order $n^{1-q/d}$ for every $q>p$. We note that the case $q<p$ is trivially true by optimality of $\sigma_p$ and an application of H\"older's inequality with exponent $p/q$. If the conjecture were true (with $q=2p-2$), it would remove the restriction $p<d^2/2$ in our concentration bounds. Similar conjectures can be stated for TSP and bipartite TSP cases.

\subsection{Related literature and context}\label{subsec:literature}

Random Euclidean combinatorial optimization lies at the intersection of probability,
geometric measure theory, statistical physics, and algorithms.
The foundational results are the law of large numbers for Euclidean functionals,
starting from the Beardwood–Halton–Hammersley theorem and its extensions to broad
classes of subadditive Euclidean functionals; see also the monographs by Steele and
Yukich for a unified probabilistic framework covering the minimum spanning tree (MST), matching, TSP, and related models \cite{BHH59,Steele97,Yukich98}.

On the probabilistic side, quantitative asymptotics and fluctuation theory for Euclidean
matching and assignment problems originate in the work of Ajtai–Komlós–Tusnády and its
subsequent refinements via multiscale, Fourier, and empirical-process techniques \cite{AKT84,BoLe21}.
Modern developments connect these ideas to geometric probability tools such as
uniform occupancy, stabilization, and VC-type arguments
\cite{PenroseRGG03,DevroyeLugosi01}.

A complementary viewpoint arises from statistical physics and optimal transport, 
{
where explicit descriptions of the cost where firstly obtained in \cite{CaLuPaSi14} and lately justified in \cite{AmGlaTre, AmStTr16}. 
}
For matching-type problems, renormalization heuristics and elliptic PDE descriptions
have led to precise asymptotics and robust structural results in low dimensions,
notably through the PDE/OT approach and subsequent work \cite{AmStTr16,GoldTrev24ran}.
In particular, \cite[Proposition~5.3]{GoldTrev24ran} gives concentration for
$p<d$ under general structural assumptions, including the bipartite matching
problem considered here.  Our matching result extends the conclusion to
$d\le p<d^2/2$ by supplying stronger control of the maximal optimal edge.

From the algorithmic perspective, Euclidean TSP and matching have long served as
benchmarks for local-improvement heuristics (e.g.\ $2$-opt and $k$-opt moves) and
average-case analysis in computer science and operations research, alongside
worst-case PTAS results for geometric optimization \cite{Croes58,LinKernighan73,Arora98,Mitchell99}.

\medskip
The present note contributes to the concentration theory of these models.
While the use of a Poincar\'e (spectral gap) inequality is not new in this setting
(see e.g.\ \cite{GoldTrev24ran}), our contribution is to isolate a quantitative and purely
geometric mechanism converting local swap optimality (cyclical monotonicity for matching,
admissible $2$-edge swaps for TSP) and mesoscopic density into a quantitative
$L^\infty$ bound on edge lengths. This uniform edge control strengthens existing
approaches and provides a direct route to subleading fluctuation bounds
without relying on integrable representations of the optimiser.

\subsection{Extensions and open problems}\label{subsec:extensions}

The mechanism developed in this note is not specific to matching or TSP,
but relies only on two structural ingredients:

\begin{itemize}
\item[(i)] a local swap (or $2$--edge) optimality condition for the minimiser;
\item[(ii)] mesoscopic density control of the underlying random point cloud.
\end{itemize}

We expect the same strategy to apply to a broader class of Euclidean
combinatorial problems whose minimisers satisfy a local exchange property,
provided the corresponding local exchange estimate and annealed bounds are
available.
Examples include minimal spanning trees, $k$--MST, degree-constrained spanning
subgraphs such as $k$--factors, and related matching-type structures.
For these examples it remains necessary to establish the appropriate exchange
inequality and to control its multiplicities; we do not claim those extensions
here {and leave them for future work.}

Although we restrict attention to the cube with uniform sampling for simplicity,
we expect the argument to extend to substantially more general settings.
If the underlying measure has a density $\rho$ that is bounded above and below
on a bounded domain with Lipschitz boundary, the mesoscopic density estimates
should remain valid.
More generally, on compact Riemannian manifolds with bounded geometry,
an analogous argument should be possible provided geodesics satisfy the needed
triangle estimates, small geodesic balls have uniform $r^d$ volume bounds, and
the reference measure has a Poincar\'e inequality.  {Again,} these extensions are not
proved in this paper {but left to future work}.

For simplicity, we also formulated concentration using the $L^2$ Poincar\'e inequality,
bounding the variance by the squared $L^2$ norm of the gradient.
This yields polynomial tail bounds via Chebyshev or Markov inequalities.
Stronger tail estimates (e.g.\ higher polynomial moments or sub-Gaussian behavior
in suitable regimes) could be obtained by combining the same geometric input
with $L^q$ versions of the Poincar\'e inequality or related functional inequalities.
We do not pursue these refinements here, as our focus is on the geometric mechanism.

The main limitation of the present method is the restriction $p<d^2/2$,
which arises from the current power-law control of the maximal edge length.
The exploratory numerical evidence does not display an obvious qualitative
change at $p=d^2/2$ over the tested sizes, but it does not establish behavior
outside the proved range. Establishing concentration for arbitrary $p$ remains, in our view, the central open
problem in this direction.

\medskip
\noindent
\textbf{Organization.}
Section~\ref{sec:matching} proves concentration for bipartite matching. Section~\ref{sec:tsp} treats the monopartite TSP and then the bipartite TSP. Appendix~\ref{sec:numerics} presents exploratory numerical simulations related to Conjecture~\ref{conj:ptoq} and behavior near $p=d^2/2$.

\medskip
\noindent
\textbf{Acknowledgements.}  M.~D'A.~acknowledges support from the ANR project LOUCCOUM (ANR-24-CE40-7809).  M.~D'A. and F.\,M. are grateful to the Department of Mathematics, University of Pisa for excellent working conditions on the occasion of the invitation that started this project (November 2021). F.\,M. acknowledges the support of the ERC Advanced Grant NEITALG, grant agreement No. 101198055. D.T.\ acknowledges the MUR Excellence Department Project awarded to the Department of Mathematics, University of Pisa, CUP I57G22000700001, the HPC Italian National Centre for HPC, Big Data and Quantum Computing - Proposal code CN1 CN00000013, CUP I53C22000690001, the PRIN 2022 Italian grant 2022WHZ5XH - ``understanding the LEarning process of QUantum Neural networks (LeQun)'', CUP J53D23003890006, the project  G24-202 ``Variational methods for geometric and optimal matching problems'' funded by Università Italo Francese.  Research also partly funded by PNRR - M4C2 - Investimento 1.3, Partenariato Esteso PE00000013 - "FAIR - Future Artificial Intelligence Research" - Spoke 1 "Human-centered AI", funded by the European Commission under the NextGeneration EU programme. This research benefitted from the support of the FMJH Program Gaspard Monge for optimization and operations research and their interactions with data science.

\section{Concentration for bipartite matching}
\label{sec:matching}

Let $\mathbf x=(x_i)_{i=1}^n$ and $\mathbf y=(y_j)_{j=1}^n$ be points in $\cube$, and let $p\ge 1$.
The (Euclidean) bipartite matching cost is
\begin{equation}\label{eq:opt_match_prob}
\cC^p_{bM}(\mathbf x,\mathbf y)\coloneqq\min_{\sigma\in\mathcal S_n}\sum_{i=1}^n |x_i-y_{\sigma(i)}|^p.
\end{equation}

We use the following notational conventions throughout.
For a point set $\mathbf z=(z_i)_{i=1}^n$ and a measurable set $\Omega\subseteq\cube$ we write
\[
N^{\mathbf z}_\Omega\coloneqq\#\{\,i\in\{1,\dots,n\}: z_i\in\Omega\,\}.
\]
We denote by $B(x,r)$ the Euclidean ball in $\R^d$ of centre $x$ and radius $r$;
occupation counts below explicitly use $B(x,r)\cap\cube$.

In the random case $\mathbf x = \mathbf X$, $\mathbf y = \mathbf Y$, we assume that $x_i = X_i$ and $y_j = Y_j$ are realizations of i.i.d.\ random variables uniformly distributed on $\cube$. The matching cost becomes a random variable $\cC^p_{bM}(\mathbf X, \mathbf Y)$, and its annealed asymptotics are by now standard \cite{AKT84,Ta92,AmStTr16,dBGM99,FoGu15}: as $n \to \infty$,
\begin{equation}\label{eq:match_bound}
\EE[\cC^p_{bM}(\mathbf X,\mathbf Y)] \asymp
\footnote{
{
We write $A\lesssim B$ if there exists a constant $C>0$, that may only depend on $d$ and $p$, such that $A\le CB$. Similarly, we write $A \gtrsim B$ is there exists a constant $C>0$, that may only depend on $d$ and $p$, such that $A\ge CB$. We write $A\asymp B$ if both
$A\lesssim B$ and $A \gtrsim B$.
}
}
n
\begin{cases}
n^{ - \frac p 2} & \text{if $d = 1$}, \\
\big( \frac{\log n}n \big)^{\frac p 2} & \text{if $d =2$}, \\
n^{ - \frac p d} & \text{if $d \ge 3$},
\end{cases}
\end{equation}
where the constants in $\asymp$ may depend on $d$ and $p$; existence of a
limit after rescaling remains open only in $d=2$, $p\neq2$.  The exceptional
scaling in dimensions $d=1$ and $d=2$ is due to anomalously long optimal
matching edges, {and are not dealt with in the present paper. We focus here on $d\ge3$}.

\begin{theorem}[\textsc{Concentration for bipartite matching}]\label{thm:conc_matching}
Let $d\ge 3$ and $1\le p<d^2/2$. Then, there exist constants $\theta = \theta(p,d)>0$ and $C=C(p,d)<\infty$ such that for all $n\ge 1$ and $\lambda>0$,
\[
\PP\!\left(\left| \cC^p_{bM} (\mathbf{X}, \mathbf{Y})  - \EE[ \cC^p_{bM} (\mathbf{X}, \mathbf{Y}) ] \right| \ge  \lambda n^{1-\frac p d}\right)
 \le C  n^{- \theta} \lambda^{-2} .
 \]
\end{theorem}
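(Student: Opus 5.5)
Via Chebyshev, the theorem reduces to a variance bound
\[
\mathrm{Var}\big(\cC^p_{bM}(\mathbf X,\mathbf Y)\big)\lesssim n^{2-2p/d-\theta}.
\]
To get it, I apply the tensorized Poincar\'e inequality \eqref{eq:poincare_intro} on $(\cube)^{2n}$ to $F=\cC^p_{bM}$. This $F$ is a minimum of finitely many locally Lipschitz functions, so it is locally Lipschitz. At a.e.\ configuration the optimiser $\sigma$ is unique and $F$ is differentiable, with
\[
\nabla_{x_i}F=p|x_i-y_{\sigma(i)}|^{p-2}(x_i-y_{\sigma(i)}),
\]
and symmetrically for the $y_j$. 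Hence
\[
|\nabla F|^2=2p^2\sum_i|x_i-y_{\sigma(i)}|^{2(p-1)}.
\]
Since $C_{\mathrm P}$ for the product measure is independent of $n$, it suffices to bound $\EE\sum_i|e_i|^{2p-2}$, where $e_i=x_i-y_{\sigma(i)}$.

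\textbf{Case $p\le 2$.} Here $2p-2\le p$, and H\"older (as in the remark after Conjecture~\ref{conj:ptoq}) gives
\[
\EE\sum_i|e_i|^{2p-2}\le n^{2-p}\,\big(\EE\,\cC\big)^{(2p-2)/p}\lesssim n\cdot n^{-(2p-2)/d}.
\]
The target is $n^{2-2p/d-\theta}$, so the ratio is $n^{-1+2/d+\theta}$, which is fine since $d\ge3$. This case is essentially the range already covered by \cite{GoldTrev24ran}.

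\textbf{Case $p>2$.} I use \eqref{eq:grad_scheme_intro}:
\[
\sum|e|^{2p-2}\le \cC\cdot M^{p-2},\qquad M=\max_i|e_i|.
\]
The key step is the maximal-edge estimate: for every $\alpha<p/(p+d)$ and every $k$, $\PP(M>Kn^{-\alpha/d})\lesssim n^{-k}$. I plan to prove it in three steps.

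\emph{(i) Diffuseness.} Using Chernoff bounds and a union bound over a grid of $\mathrm{poly}(n)$ centres, I show that with probability $1-O(n^{-k})$ every ball $B(z,r)\cap\cube$ with $r\ge n^{-\alpha/d}$ contains $\gtrsim nr^d$ points of each cloud. This needs $nr^d\ge n^{1-\alpha}\gg\log n$, which holds.

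\emph{(ii) Local edge-to-energy inequality (the deterministic lemma).} Suppose $|x_i-y_{\sigma(i)}|=L$. Let $r=\epsilon L$ and consider the balls $B(x_i,r)$ and $B(y_{\sigma(i)},r)$. For each $x_j\in B(y_{\sigma(i)},r)$, optimality under the swap of partners gives
\[
L^p+|x_j-y_{\sigma(j)}|^p\le |x_i-y_{\sigma(j)}|^p+|x_j-y_{\sigma(i)}|^p.
\]
I want to deduce that either $x_j$'s partner is far away, $|x_j-y_{\sigma(j)}|\gtrsim L$, or else the swap would lower the cost; the precise geometry of the triangle makes this work. Summing over the $\gtrsim nr^d$ points in the ball yields an energy lower bound
\[
\cC\ \ge\ \sum_{j}|x_j-y_{\sigma(j)}|^p\ \gtrsim\ nL^d\cdot L^p.
\]

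\emph{(iii) Upper bound on $\cC$.} With overwhelming probability $\cC\lesssim n^{1-p/d}$; an upper tail bound follows from a crude deterministic construction, e.g.\ grid/AKT-type matching on the diffuse event. Comparing with (ii) gives
\[
L^{p+d}\lesssim n^{-p/d}\quad\Longrightarrow\quad L\lesssim n^{-p/(d(p+d))}.
\]
This is exactly the claimed exponent, with the $o(1)$ loss coming from the requirement $r\ge n^{-\alpha/d}$ in the diffuseness step.

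\textbf{Closing the estimate.} On the good event,
\[
\EE[\cC M^{p-2}]\lesssim n^{1-p/d}\,n^{-\alpha(p-2)/d}.
\]
This should be compared with the target $n^{2-2p/d-\theta}$, so I need
\[
1-\frac pd-\frac{\alpha(p-2)}d<2-\frac{2p}d,\qquad\text{i.e.}\qquad \alpha(p-2)>p-d.
\]
Letting $\alpha\uparrow p/(p+d)$, this condition becomes $p(p-2)>(p-d)(p+d)=p^2-d^2$, i.e.\ $2p<d^2$, which is precisely the hypothesis $p<d^2/2$. The slack gives $\theta>0$. On the bad event I use the crude bounds $|\nabla F|^2\le 2p^2 n\,d^{p-1}$ times $n^{-k}$ with $k$ large, which is negligible.

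\textbf{Main obstacle.} I expect step (ii) to be the hardest part: making the swap inequality force many partners to be long needs a careful choice of $\epsilon$ depending on $p$. The argument must also ensure that the edges produced are counted without excessive multiplicity, and it must handle balls near the boundary of $\cube$ via the $B(x,r)\cap\cube$ volume bound. I would first try the purely metric estimate
\[
|x_i-y_{\sigma(j)}|\le |x_j-y_{\sigma(j)}|+r,\qquad |x_j-y_{\sigma(i)}|\le r,
\]
and expand $(\ell+r)^p$ to force $\ell\gtrsim L$.
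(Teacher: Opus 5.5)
\paragraph{Gap in step (ii).} This step is the deterministic input the whole theorem rests on, and as set up it fails: balls centred at the \emph{endpoints} of the long edge give no information. For $x_j\in B(y_{\sigma(i)},r)$ your estimate $|x_i-y_{\sigma(j)}|\le |x_j-y_{\sigma(j)}|+r$ is false. It needs $|x_i-x_j|\le r$, whereas in fact $|x_i-x_j|\ge L-r$. Your two metric estimates together would require $x_j$ to lie within $r$ of both endpoints, which is impossible for $r<L/2$. The correct bound is $|x_i-y_{\sigma(j)}|\le L+r+\ell$ with $\ell=|x_j-y_{\sigma(j)}|$. With it the swap inequality is vacuous, because $(L+r+\ell)^p\ge L^p+\ell^p$.

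This is not just a slip in the triangle inequality: the dichotomy you want is false near an endpoint. Take $n=2$, $x_i=0$, $y_{\sigma(i)}=Le_1$, $x_j=(L+\delta)e_1$, $y_{\sigma(j)}=(L+\delta+\ell)e_1$ with $\delta<r$, scaled into $\cube$. The only competitor costs $(L+\delta+\ell)^p+\delta^p\ge L^p+\ell^p$, so $\sigma$ is optimal for every $\ell>0$. Yet $x_j\in B(y_{\sigma(i)},r)$ has an arbitrarily short partner. More generally, the swap constrains nothing for any $x_j$ in that ball with $|x_i-y_{\sigma(j)}|\ge L+\ell$, for example points beyond $y_{\sigma(i)}$ matched radially outward. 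So you cannot obtain $\cC\gtrsim nL^d\cdot L^p$ this way.

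\paragraph{The missing idea.} Centre the ball at the midpoint $z_i=(x_i+y_{\sigma(i)})/2$, with radius $\epsilon L$. For $x_j\in B(z_i,\epsilon L)$ both crossed distances are about half the long edge:
\[
|x_j-y_{\sigma(i)}|\le(\tfrac12+\epsilon)L,\qquad |x_i-y_{\sigma(j)}|\le(\tfrac12+\epsilon)L+\ell .
\]
The swap inequality and $(a+b)^p\le(1+\eta)a^p+C_{\eta,p}b^p$ then give $L^p+\ell^p\le(2+\eta)(\tfrac12+\epsilon)^pL^p+C\ell^p$. Since $p>1$ we have $2\cdot 2^{-p}<1$, so for small $\epsilon,\eta$ the $L^p$ term can be absorbed. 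This yields $L^p\le C\ell^p$ for \emph{every} $x_j$ in the ball. Summing over these distinct indices gives $N^{\mathbf x}_{B(z_i,\epsilon L)}L^p\lesssim\cC$, and occupancy then gives $nL^{p+d}\lesssim\cC$; this is Lemma~\ref{lem:l_infty_to_loc} in the paper.

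With this lemma, the rest of your architecture is the paper's: Poincar\'e, the split at $p=2$, $\sum|e|^{2p-2}\le\cC M^{p-2}$, and the exponent count leading to $2p<d^2$. Your way of closing, with a deterministic bound on the good event and a crude bound on the bad event, is a valid substitute for the paper's Cauchy--Schwarz step.

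\paragraph{Two smaller points.}
\begin{itemize}
\item In (iii) you do not need $\cC\lesssim n^{1-p/d}$ with overwhelming probability. A naive multiscale construction would not give it for $p>d/2$ anyway: moving the $\sim\sqrt n$ excess points at the coarsest scale already costs $\sim\sqrt n\gg n^{1-p/d}$. It suffices to work on $\{\cC\le n^{1-\alpha'p/d}\}$ with $\alpha'=\alpha(p+d)/p<1$. H\"older applied to a $ps$--optimal matching gives $\cC^s\le n^{s-1}\cC^{ps}_{bM}$. Markov's inequality and the annealed bound at exponent $ps$ then bound the complement by $n^{(\alpha'-1)ps/d}$, which decays at any polynomial rate for large $s$.
\item In the case $p\le 2$, the H\"older prefactor is $n^{(2-p)/p}$, not $n^{2-p}$. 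Your final bound $n^{1-(2p-2)/d}$ is nonetheless correct.
\end{itemize}
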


\subsection{2-opt inequality and local edge-to energy control}

Before we provide the proof of Theorem~\ref{thm:conc_matching}, we focus on the key steps, described informally in the previous section, that allow us to obtain a uniform control on edge costs, starting from a $2$--opt inequality. In the case of bipartite matching, the optimality condition we use is the following two-point swap inequality, which is a special instance of cyclical monotonicity in optimal transport theory, 
{
see for instance \cite[Theorem 368]{HardyLittlewoodPolya1952}.
}

\begin{lemma}[$2$--opt inequality]\label{lem:2opt_matching}
Let $\sigma$ be an optimiser in \eqref{eq:opt_match_prob}. Then for all $i,j\in\{1,\dots,n\}$,
\begin{equation}\label{eq:2opt}
|x_i-y_{\sigma(i)}|^p + |x_j-y_{\sigma(j)}|^p \le |x_i-y_{\sigma(j)}|^p + |x_j-y_{\sigma(i)}|^p.
\end{equation}
\end{lemma}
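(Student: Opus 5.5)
The plan is a direct comparison argument: modify the optimiser $\sigma$ by a single transposition and use minimality of the cost. Fix $i,j\in\{1,\dots,n\}$. If $i=j$ there is nothing to prove, since both sides of \eqref{eq:2opt} coincide. So assume $i\neq j$ and define the competitor $\tau\coloneqq\sigma\circ(i\,j)$, where $(i\,j)$ is the transposition exchanging $i$ and $j$. Thus $\tau(i)=\sigma(j)$, $\tau(j)=\sigma(i)$, and $\tau(k)=\sigma(k)$ for $k\notin\{i,j\}$. Since $\tau$ is a composition of permutations, $\tau\in\mathcal S_n$, so it is admissible in \eqref{eq:opt_match_prob}.

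By optimality of $\sigma$,
\[
\sum_{k=1}^n |x_k-y_{\sigma(k)}|^p\le \sum_{k=1}^n |x_k-y_{\tau(k)}|^p .
\]
All terms with $k\notin\{i,j\}$ agree on both sides. These are finitely many finite real numbers, so we may cancel them. What remains is exactly
\[
|x_i-y_{\sigma(i)}|^p+|x_j-y_{\sigma(j)}|^p\le |x_i-y_{\sigma(j)}|^p+|x_j-y_{\sigma(i)}|^p,
\]
which is \eqref{eq:2opt}.

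No step is a genuine obstacle. The only points to check are that the swapped map is again a bijection and the trivial case $i=j$. The argument uses nothing about $p$ or the Euclidean structure beyond the cost being a finite sum over pairs, which is why it is a special case of cyclical monotonicity, as noted before the statement.
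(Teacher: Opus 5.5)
Your proposal is correct and follows the paper's argument exactly: the paper's one-line proof is that if \eqref{eq:2opt} failed, swapping $\sigma(i)$ and $\sigma(j)$ would strictly decrease the total cost. Your write-up spells out the same transposition $\tau=\sigma\circ(i\,j)$, cancels the common terms, and handles the trivial case $i=j$.
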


\begin{proof}
Otherwise swapping $\sigma(i)$ and $\sigma(j)$ strictly decreases the total cost.
\end{proof}

We now make the $2$--opt condition quantitative by turning it into a uniform bound of a single edge cost by local energy.
In the semi-discrete matching (where one cloud of points is replaced with the uniform density), this type of local-energy control goes back at least to \cite[Lemma 4.4]{AmGlaTre} and \cite[Lemma 4.1]{GO}; the lemma below adapts this viewpoint to the present two-cloud setting.

\begin{lemma}\label{lem:l_infty_to_loc}
For any $p>1$, there exists $\epsilon=\epsilon(p)>0$ and $C=C(p)<\infty$ such that the following holds. Let $\mathbf{x} = (x_i)_{i=1}^n, \mathbf{y} = (y_j)_{j=1}^n \subseteq \cube$ be two sets of points, and let $\sigma$ be an optimiser for the $p$-cost. For any $i\in\{1,\dots,n\}$, set
\[
B_i = B\!\left( \frac{x_i+y_{\sigma(i)} }{2} , \epsilon\, |x_i- y_{\sigma(i)}| \right).
\]
Then,
\begin{equation}\label{eq:L-infty-loc-energy}
N^{\mathbf{x}}_{B_i} \cdot |x_i- y_{\sigma(i)}|^{p}
\;\le\; C \sum_{x_j \in B_i} |x_j - y_{\sigma (j)}|^p.
\end{equation}
\end{lemma}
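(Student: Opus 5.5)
The plan is to apply the $2$--opt inequality of Lemma~\ref{lem:2opt_matching} to the pair $(i,j)$ for every $j$ with $x_j\in B_i$, and then sum over such $j$. Write $L=|x_i-y_{\sigma(i)}|$ and $m=(x_i+y_{\sigma(i)})/2$, and set $\ell_j=|x_j-y_{\sigma(j)}|$ for $x_j\in B_i$. It suffices to show that for each such $j$
\[
L^p \le C\,\ell_j^p .
\]
Summing this over the $N^{\mathbf x}_{B_i}$ indices with $x_j\in B_i$ then gives \eqref{eq:L-infty-loc-energy} directly. The index $j=i$ is trivial, since $\ell_i = L$. So the statement reduces to a pointwise, deterministic claim: a point of $\mathbf x$ lying near the midpoint of a long edge must itself carry an edge of comparable length.

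To prove the pointwise claim, fix $j$ with $|x_j-m|\le\epsilon L$ and argue by contradiction. Suppose $\ell_j\le\delta L$ for a small $\delta=\delta(p)$ to be chosen. Then $y_{\sigma(j)}$ lies within $(\epsilon+\delta)L$ of the midpoint $m$, and so does $x_j$. By the triangle inequality both cross terms in \eqref{eq:2opt} are then close to $L/2$:
\[
|x_i-y_{\sigma(j)}|\le \big(\tfrac12+\epsilon+\delta\big)L,\qquad |x_j-y_{\sigma(i)}|\le \big(\tfrac12+\epsilon\big)L .
\]
The right-hand side of \eqref{eq:2opt} is therefore at most $2\big(\tfrac12+\epsilon+\delta\big)^pL^p$. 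The left-hand side is at least $L^p$. Since $p>1$, we have $2\cdot 2^{-p}<1$, so choosing $\epsilon,\delta$ small enough that
\[
2\big(\tfrac12+\epsilon+\delta\big)^p<1
\]
gives a contradiction. Hence $\ell_j>\delta L$, and the claim holds with $C=\delta^{-p}$.

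This is exactly where the hypothesis $p>1$ enters. For $p=1$ the strict convexity gain disappears, because two half-length edges cost as much as one full-length edge. The constants then depend only on $p$: for instance we may take $\epsilon=\delta=\tfrac12\big(2^{-1/p}-\tfrac12\big)$.

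I expect no serious obstacle; the one point to handle carefully is the geometry of the contradiction step. The left-hand side of \eqref{eq:2opt} must be bounded below by $L^p$ alone, simply discarding the term $\ell_j^p\ge 0$. This works because we argue by contradiction on the smallness of $\ell_j$, so that $y_{\sigma(j)}$ is pinned near $m$. No assumption on the cube or on the boundary is needed, since everything takes place through distances in $\R^d$.
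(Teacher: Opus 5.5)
Your proof is correct and is essentially the paper's argument: for each $x_j\in B_i$ you apply the $2$--opt inequality to the pair $(i,j)$, use the midpoint triangle inequalities and $2^{1-p}<1$ to get $|x_i-y_{\sigma(i)}|^p\le C\,|x_j-y_{\sigma(j)}|^p$, and then sum. The only difference is that you handle the $\ell_j$ contribution to $|x_i-y_{\sigma(j)}|$ by a case split ($\ell_j>\delta L$ or not), whereas the paper absorbs it via $(a+b)^p\le(1+\eta)a^p+C_{\eta,p}b^p$.

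One small slip: your sample choice $\epsilon=\delta=\tfrac12\bigl(2^{-1/p}-\tfrac12\bigr)$ gives $2\bigl(\tfrac12+\epsilon+\delta\bigr)^p=1$ exactly rather than $<1$, so halve it. Also note that $L=0$ must be treated separately, though it is trivial, because your contradiction needs $L>0$.
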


\begin{proof}[Proof of Lemma \ref{lem:l_infty_to_loc}]
Write $z_i=(x_i+y_{\sigma(i)})/2$ and assume that $0<\epsilon<1/2$. 
{
Note that if $B_i\cap\mathbf x=\varnothing$ then \eqref{eq:L-infty-loc-energy} is trivial. Let $x_j\in B_i$. Suppose that $B_i\cap\mathbf x \neq \varnothing$ and let $x_j \in B_i$.
}
It suffices to show that \eqref{eq:2opt} and the triangle inequality imply
\begin{equation}\label{eq:Linfty-inf-ineq}
|x_i-y_{\sigma(i)}|^p \le C \,|x_j-y_{\sigma(j)}|^p.
\end{equation}
Indeed, summing \eqref{eq:Linfty-inf-ineq} over $x_j\in B_i$ yields \eqref{eq:L-infty-loc-energy}. By the triangle inequality,
\begin{align}
|x_i-y_{\sigma (j)}|
&\le |x_i-z_i| + |z_i-x_j| + |x_j-y_{\sigma(j)}|
\le (1 + \epsilon) |x_i-z_i| + |x_j-y_{\sigma(j)}|,\label{eq:triang-1}\\
|x_j-y_{\sigma(i)}|
&\le |x_j-z_i| + |z_i-y_{\sigma (i)}|
\le (1 + \epsilon) |x_i-z_i|,\label{eq:triang-2}
\end{align}
{
see figure \ref{fig:geo-pic} for a geometrical picture.
}

\medskip
Using the inequality $(a+b)^p \le (1+\eta)a^p + C_{\eta,p}\,b^p$ with $\eta>0$ small, together with \eqref{eq:2opt}, \eqref{eq:triang-1} and \eqref{eq:triang-2}, we obtain
\[
|x_i-y_{\sigma(i)}|^p + |x_j-y_{\sigma(j)}|^p
\le (2+\eta)(1+\epsilon)^p |x_i-z_i|^p + C |x_j-y_{\sigma(j)}|^p.
\]
Since $|x_i-z_i|=\frac12|x_i-y_{\sigma(i)}|$, choosing $\epsilon=\epsilon(p)$ (and then $\eta=\eta(p)$) so that
$1-(2+\eta)(1+\epsilon)^p2^{-p}>0$, we can rearrange {the above display} to get \eqref{eq:Linfty-inf-ineq}.
\begin{figure}[ht!]
  \centering
  \begin{tikzpicture}[scale=0.95]
    \tikzstyle{xpoint}=[circle, fill=blue!70, inner sep=1.6pt]
    \tikzstyle{ypoint}=[circle, fill=orange!85!black, inner sep=1.6pt]
    \tikzstyle{midpoint}=[circle, fill=black, inner sep=1.4pt]

    \node[xpoint, label=below:{$x_i$}] (xi) at (0.2,0.1) {};
    \node[ypoint, label=right:{$y_{\sigma(i)}$}] (yi) at (8.0,2.0) {};
    \draw[thick] (xi) -- (yi);

    \coordinate (z) at ($(xi)!0.5!(yi)$);
    \node[midpoint, label=above:{$z_i$}] at (z) {};

    \def\radius{2.5}
    \draw[dashed, thick] (z) circle [radius=\radius];
    \draw[<->, red!80!black, thick] (z) -- ++(-35:\radius)
      node[midway, below, sloped, text=black] {$\epsilon\,|x_i-y_{\sigma(i)}|$};

    \node[xpoint, label=above:{$x_j$}] (xj) at (4.6,1.55) {};
    \node[ypoint, label=above:{$y_{\sigma(j)}$}] (yj) at (2.0,3.35) {};
    \draw[thick] (xj) -- (yj);
    \draw[gray!70, densely dotted] (z) -- (xj);
  \end{tikzpicture}
  \caption{Local geometry used in Lemma \ref{lem:l_infty_to_loc}.}
  \label{fig:geo-pic}
\end{figure}
\end{proof}

\begin{lemma}[Poincar\'e inequality on the cube]\label{lem:cube_poincare}
Let $\mu_m$ be {the} uniform probability measure on $(\cube)^m$.  Every locally
Lipschitz $F:(\cube)^m\to\mathbb R$ satisfies
\[
\operatorname{Var}_{\mu_m}(F)
\le C_{\rm P}\int_{(\cube)^m}|\nabla F|^2\,d\mu_m,
\]
where $C_{\rm P}$ is independent of $m$.
\end{lemma}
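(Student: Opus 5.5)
The plan is to prove Lemma~\ref{lem:cube_poincare} in two steps: first the one-dimensional inequality on the interval, then the extension to the product $(\cube)^m=[0,1]^{dm}$ by tensorization of variance. Since $(\cube)^m$ is itself the unit cube $[0,1]^{dm}$ with Lebesgue measure, it suffices to prove the Poincar\'e inequality on $[0,1]^N$ with a constant independent of $N$, and then take $N=dm$.

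\textbf{One-dimensional step.} For a Lipschitz (hence absolutely continuous) $f:[0,1]\to\R$, I will write
\[
f(s)-f(t)=\int_t^s f'(u)\,du .
\]
Then
\[
\operatorname{Var}(f)=\tfrac12\iint (f(s)-f(t))^2\,ds\,dt\le \tfrac12\iint |s-t|\int_0^1 |f'|^2\,du\,ds\,dt = \tfrac1{12}\int_0^1|f'|^2 .
\]
The Cauchy--Schwarz step here bounds $(\int_t^s f')^2\le|s-t|\int_0^1|f'|^2$. This gives $C_{\rm P}\le 1/12$; the sharp constant $1/\pi^2$ from the Neumann eigenvalue is not needed.

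\textbf{Tensorization.} I will use the Efron--Stein-type subadditivity of variance for product measures: for $F\in L^2$ of independent coordinates $u_1,\dots,u_N$,
\[
\operatorname{Var}(F)\le\sum_{k=1}^N\EE\big[\operatorname{Var}_k(F)\big],
\]
where $\operatorname{Var}_k$ is the variance in $u_k$ with the other coordinates frozen. I will prove this via the martingale decomposition $F-\EE F=\sum_k(\EE[F\mid\mathcal F_k]-\EE[F\mid\mathcal F_{k-1}])$, which has orthogonal increments, combined with Jensen's inequality applied to each increment. Applying the one-dimensional bound to each section $u_k\mapsto F(u)$ then gives $\EE[\operatorname{Var}_k F]\le \frac1{12}\EE[|\partial_k F|^2]$. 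Summing over $k$ yields $\operatorname{Var}(F)\le\frac1{12}\EE|\nabla F|^2$, with constant independent of $N$, and hence of $m$.

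\textbf{Regularity (expected to be the only delicate point).} A locally Lipschitz function on the compact cube is Lipschitz, so it is differentiable almost everywhere by Rademacher's theorem. For almost every choice of frozen coordinates, the section in $u_k$ is Lipschitz, and its derivative coincides almost everywhere with $\partial_kF$. This follows from Fubini's theorem applied to the full-measure set of differentiability points. This justifies the one-dimensional step on almost every section and completes the proof.
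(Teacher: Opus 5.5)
Your proof follows the same route as the paper's. You apply a one-dimensional Poincar\'e inequality in each scalar coordinate of $[0,1]^{dm}$ and then tensorize; your martingale/Efron--Stein bound $\operatorname{Var}(F)\le\sum_k\EE[\operatorname{Var}_k(F)]$ is exactly the tensorization the paper invokes, and it yields a constant independent of $m$. There are two differences:
\begin{enumerate}
\item You prove the one-dimensional inequality by hand, whereas the paper quotes the sharp Wirtinger constant $1/\pi^2$.
\item You treat locally Lipschitz $F$ directly (Lipschitz on the compact cube, then Rademacher, then Fubini on sections) instead of approximating by smooth functions. This regularity argument is correct, and it spares you checking convergence of $\int|\nabla F_k|^2$ along an approximating sequence.
\end{enumerate}

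There is one arithmetic error to fix. We have $\iint_{[0,1]^2}|s-t|\,ds\,dt=\tfrac13$, which is twice the integral over $\{s>t\}$. So your one-dimensional bound is $\operatorname{Var}(f)\le\tfrac16\int_0^1|f'|^2$, not $\tfrac1{12}$.

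The claimed $\tfrac1{12}$ is in fact impossible. Take $f(s)=\cos(\pi s)$: then $\operatorname{Var}(f)=\tfrac12$ and $\int_0^1|f'|^2=\tfrac{\pi^2}{2}$. Hence no constant below $1/\pi^2\approx0.101$ works, while $\tfrac1{12}\approx0.083$. If, after Cauchy--Schwarz, you keep $\int_{s\wedge t}^{s\vee t}|f'|^2$ rather than $\int_0^1|f'|^2$, the weight in front of $|f'(u)|^2$ becomes $\tfrac12u(1-u)\le\tfrac18$, which improves the constant to $\tfrac18$.

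The lemma only requires some constant independent of $m$, so this slip does not affect the argument. With the correction, your proof is complete and gives $C_{\rm P}=\tfrac16$.
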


\begin{proof}
The one-dimensional Wirtinger inequality for uniform measure on $[0,1]$
has constant $1/\pi^2$, 
{
see for instance \cite[Proposition 4.5.5]{BGL14}.  
}
Applying it conditionally in each scalar coordinate
and tensorizing gives the assertion, with $C_{\rm P}=1/\pi^2$; see
\cite[Section~3.1]{BGL14}.  Approximation extends the inequality from smooth
functions to locally Lipschitz functions.
\end{proof}

\begin{lemma}[Uniform mesoscopic occupancy]\label{lem:occupancy}
Fix $d\ge1$, $\alpha\in(0,1)$ and $r_*\in(0,1/8)$.  There is $c_d>0$ such
that, for i.i.d.\ uniform points $\mathbf X=(X_i)_{i=1}^n$ and
\[
A_n=\left\{N^{\mathbf X}_{B(x,r)\cap\cube}\ge c_dnr^d
 :x\in\cube,\ n^{-\alpha/d}\le r\le r_*\right\},
\]
one has, for every $\beta>0$,
\begin{equation}\label{eq:quant-diffusion-unif}
\PP(A_n^c)\le C_{d,\alpha,\beta,r_*}n^{-\beta}.
\end{equation}
\end{lemma}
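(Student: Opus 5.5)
The plan is a standard discretization plus union bound. The set $A_n$ involves a continuum of centres and radii, so we first reduce to finitely many balls. Fix a dyadic-type grid: for $k=0,1,\dots,K$ let $r_k=2^{k}n^{-\alpha/d}$, with $K$ chosen so that $r_K\ge r_*$ (so $K\lesssim \log n$). For each $k$ let $G_k\subset\cube$ be a grid of mesh $r_k/(4\sqrt d)$, so that $\#G_k\lesssim r_k^{-d}\le n^{\alpha}$. Given $x\in\cube$ and $r\in[n^{-\alpha/d},r_*]$, pick $k$ with $r_k\le r<r_{k+1}$ (or $k=K$ if needed) and $g\in G_k$ with $|x-g|\le r_k/4$. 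Then $B(g,r_k/2)\subseteq B(x,r)$. Hence it suffices to show that, with the required probability, $N^{\mathbf X}_{B(g,r_k/2)\cap\cube}\ge c'\,n r_k^d$ for all $k$ and all $g\in G_k$. This gives the claim with $c_d=c'2^{-d}$, because $r<2r_k$.

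For a fixed pair $(g,k)$, the count $N=N^{\mathbf X}_{B(g,r_k/2)\cap\cube}$ is $\mathrm{Bin}(n,q)$ with $q=|B(g,r_k/2)\cap\cube|$. Since $r_k/2\le r_*<1/8$ and $g$ lies in the cube, at least a fixed fraction $2^{-d}$ of the ball lies in $\cube$, a corner being the worst case. Thus $q\ge \kappa_d r_k^d$ with $\kappa_d=2^{-d}\omega_d2^{-d}$. The multiplicative Chernoff bound then gives
\[
\PP(N<nq/2)\le \exp(-nq/8)\le \exp\!\big(-\tfrac{\kappa_d}{8}\, n r_k^d\big)\le \exp\!\big(-\tfrac{\kappa_d}{8}\, n^{1-\alpha}\big).
\]
On the complementary event we have $N\ge (\kappa_d/2)\,n r_k^d$, so we take $c'=\kappa_d/2$.

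A union bound over at most $(K+1)\max_k\#G_k\lesssim n^{\alpha}\log n$ pairs gives $\PP(A_n^c)\lesssim n^{\alpha}\log n\,\exp(-c\,n^{1-\alpha})$. Since $\alpha<1$, this is stretched-exponentially small and hence at most $C_{d,\alpha,\beta,r_*}n^{-\beta}$ for every $\beta$, after absorbing small $n$ into the constant.

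The only delicate points are the following. First, the boundary effect: centres near the boundary of $\cube$ must keep a positive fraction of volume, and the condition $r_*<1/8$ guarantees this. Second, the discretization must be such that the inner ball $B(g,r_k/2)$ sits inside every $B(x,r)$ it represents, which the mesh choice above ensures. Everything else is routine.
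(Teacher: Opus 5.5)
Your proof is correct and is essentially the paper's argument: dyadic radii up to about $r_*$, a net at each scale whose balls sit inside every $B(x,r)\cap\cube$ they represent, a multiplicative Chernoff bound for each net ball, and a union bound over $O(n^{\alpha}\log n)$ balls giving stretched-exponential decay. The only difference is at the boundary: the paper first places a ball of radius $4a_d r$ entirely inside $B(x,r)\cap\cube$ and then approximates its centre by a net point, whereas you let $B(g,r_k/2)$ stick out of the cube and bound $|B(g,r_k/2)\cap\cube|$ from below by the orthant argument; both work equally well.
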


\begin{proof}
Put $\rho=n^{-\alpha/d}$ and use dyadic radii
$r_k=2^k\rho$ up to $2r_*$.  A geometric constant $a_d\in(0,1/16)$ has the
following property: for every $x\in\cube$ and $r\le2r_*$, the set
$B(x,r)\cap\cube$ contains a ball of radius $4a_dr$ whose centre is in
$\cube$.  Approximate that centre by a point of an $a_dr$--net of the cube.
Consequently $B(x,r)\cap\cube$ contains one of the net balls of radius
$2a_dr$.

There are at most $C_dr_k^{-d}$ net balls at level $k$, and each has volume
at least $c_dr_k^d$, including near the boundary.  A multiplicative Chernoff
bound therefore shows that the probability that any level-$k$ net ball
contains fewer than half its mean number of sample points is at most
\[
C_dr_k^{-d}\exp(-c_dnr_k^d)
\le C_dn^\alpha\exp(-c_dn^{1-\alpha}).
\]
There are $O(\log n)$ levels.  Their union has probability smaller than
$C_\beta n^{-\beta}$ for every $\beta$.  For an arbitrary
$r\in[\rho,r_*]$, choose a dyadic level $r_k\in[r/2,r]$ and apply the
construction to $B(x,r_k)\cap\cube\subseteq B(x,r)\cap\cube$.  After
decreasing $c_d$, the contained net ball gives
$N_{B(x,r)\cap\cube}^{\mathbf X}\ge c_dnr^d$.
\end{proof}

\subsection{Proof of Theorem \texorpdfstring{\ref{thm:conc_matching}}{\ref*{thm:conc_matching}}}

\begin{proof}
Write $C_p=\cC^p_{bM}(\mathbf X,\mathbf Y)$.  The minimum of the finitely
many locally Lipschitz matching costs is locally Lipschitz.  Thus it is
differentiable at Lebesgue-a.e.\ configuration by Rademacher's theorem.  At
such a configuration, for an optimal permutation $\sigma$,
\[
\nabla_{x_i}C_p=p|x_i-y_{\sigma(i)}|^{p-2}(x_i-y_{\sigma(i)}),
\quad
\nabla_{y_{\sigma(i)}}C_p=-\nabla_{x_i}C_p,
\]
with the usual a.e.\ interpretation when $p=1$.  Hence, with
$\ell_i=|x_i-y_{\sigma(i)}|$,
\begin{equation}\label{eq:matching-gradient}
|\nabla C_p|^2\le 2p^2\sum_i\ell_i^{2p-2}.
\end{equation}

Suppose first that $1\le p\le2$ and put $q=2p-2$.  For $p>1$, H\"older's
inequality and the annealed upper bound in \eqref{eq:match_bound} give
\[
\EE\sum_i\ell_i^q
\le n^{1-q/p}\EE[C_p^{q/p}]
\le n^{1-q/p}(\EE C_p)^{q/p}
\lesssim n^{1-q/d}.
\]
For $p=1$ the left-hand side of \eqref{eq:matching-gradient} is bounded by
$2n$, which is the same estimate with $q=0$.  Lemma~\ref{lem:cube_poincare}
and Chebyshev's inequality now yield
\[
\PP\bigl(|C_p-\EE C_p|\ge\lambda n^{1-p/d}\bigr)
\lesssim\lambda^{-2}n^{-1+2/d}.
\]
This proves the result in this range because $d\ge3$.

Now let $p>2$, and set $L=\max_i\ell_i$.  From
\eqref{eq:matching-gradient},
\begin{equation}\label{eq:matching-gradient-max}
|\nabla C_p|^2\lesssim C_pL^{p-2}.
\end{equation}
The exponents in the next Cauchy--Schwarz step are important.  If $\sigma_{2p}$
is optimal for exponent $2p$, optimality at exponent $p$ followed by
Cauchy--Schwarz gives the deterministic inequality
\[
C_p^2\le\left(\sum_i|x_i-y_{\sigma_{2p}(i)}|^p\right)^2
\le n\cC^{2p}_{bM}(\mathbf x,\mathbf y).
\]
Consequently,
\begin{align}
\EE[C_pL^{p-2}]
&\le(\EE C_p^2)^{1/2}(\EE L^{2(p-2)})^{1/2}\notag\\
&\le n^{1/2}(\EE\cC^{2p}_{bM})^{1/2}
       (\EE L^{2(p-2)})^{1/2}\notag\\
&\lesssim n^{1-p/d}(\EE L^{2(p-2)})^{1/2}.
\label{eq:matching-cs}
\end{align}
The last line uses \eqref{eq:match_bound} with exponent $2p$.
Lemma~\ref{lem:cube_poincare}, \eqref{eq:matching-gradient-max}, and
Chebyshev therefore imply
\begin{equation}\label{eq:proof_conc_step_1}
\PP\bigl(|C_p-\EE C_p|\ge\lambda n^{1-p/d}\bigr)
\lesssim\lambda^{-2}n^{p/d-1}(\EE L^{2(p-2)})^{1/2}.
\end{equation}

It remains to bound $L$.  Fix $\alpha<p/(p+d)$ and define
$\alpha'=\alpha(p+d)/p<1$.  Let $A_n$ be the event in
Lemma~\ref{lem:occupancy}, with a fixed sufficiently small $r_*=r_*(d)$, and
let
\[
B_n=\{C_p\le n^{1-\alpha'p/d}\}.
\]
For every integer $s>1$, choosing an optimiser for exponent $ps$ and applying
H\"older gives
\[
C_p^s\le n^{s-1}\cC^{ps}_{bM}.
\]
Markov's inequality and \eqref{eq:match_bound} consequently give
\begin{equation}\label{eq:int-step-matchcost}
\PP(B_n^c)\lesssim n^{(\alpha'-1)ps/d}.
\end{equation}
Thus both $A_n^c$ and $B_n^c$ have arbitrarily high polynomial decay after
choosing $s$ large.

On $A_n\cap B_n$, fix an optimal edge of length $\ell$ and put
$r=\epsilon\ell$, with $\epsilon$ from Lemma~\ref{lem:l_infty_to_loc}.  If
$r\le n^{-\alpha/d}$ there is nothing to prove.  If
$n^{-\alpha/d}<r\le r_*$, Lemmas~\ref{lem:occupancy} and
\ref{lem:l_infty_to_loc} give
\[
n\ell^{p+d}\lesssim C_p\le n^{1-\alpha'p/d},
\]
so $\ell\lesssim n^{-\alpha/d}$ because
$\alpha(p+d)=\alpha'p$.  Finally, if $r>r_*$, the radius-$r_*$ ball with the
same centre lies inside $B_i$.  The same two lemmas gives
$nr_*^d\ell^p\lesssim C_p$.  Since $\ell>r_*/\epsilon$, this contradicts
$C_p\le n^{1-\alpha'p/d}$ for all sufficiently large $n$.  Enlarging the
constant handles the remaining $n$.  We have proved
\begin{equation}\label{eq:sup_est_conc}
L\lesssim_\alpha n^{-\alpha/d}
\quad\text{on }A_n\cap B_n.
\end{equation}

The cube has bounded diameter, and the exceptional events have arbitrarily
high polynomial decay; hence
$(\EE L^{2(p-2)})^{1/2}\lesssim n^{-\alpha(p-2)/d}$.  Substitution into
\eqref{eq:proof_conc_step_1} gives
\begin{equation}\label{eq:fin_est_conc}
\PP\bigl(|C_p-\EE C_p|\ge\lambda n^{1-p/d}\bigr)
\lesssim\lambda^{-2}n^{p/d-1-\alpha(p-2)/d}.
\end{equation}
An $\alpha<p/(p+d)$ makes the exponent negative precisely when
\[
p\left(1-\frac{p-2}{p+d}\right)<d
\quad\Longleftrightarrow\quad 2p<d^2.
\]
Choosing such an $\alpha$ proves the theorem.
\end{proof}

\section{Concentration for the traveling salesperson problem}
\label{sec:tsp}
We now give complete versions of the argument for the mono- and bipartite
traveling salesperson problems.

\subsection{Monopartite case}

For $n\ge3$, a Hamiltonian cycle on $\{1,\ldots,n\}$ is represented by a
cyclic ordering $\tau$.  For points $\mathbf x=(x_i)_{i=1}^n\subset\cube$ set
\[
L_p(\tau;\mathbf x)=\sum_{k=1}^n
 |x_{\tau(k)}-x_{\tau(k+1)}|^p,
\qquad \tau(n+1)=\tau(1),
\]
and
\begin{equation}\label{eq:TSP_def}
\cC^p_{TS}(\mathbf x)=\min_\tau L_p(\tau;\mathbf x).
\end{equation}
For every fixed $d\ge2$ and $q\ge1$, standard Euclidean-functional bounds
give
\begin{equation}\label{eq:tsp_annealed}
\EE[\cC^q_{TS}(\mathbf X)]\lesssim_{d,q}n^{1-q/d};
\end{equation}
see \cite{BHH59,Steele97,Yukich98}.  Only this upper bound, rather than the
existence of a rescaled limit, is used below.

\begin{theorem}[\textsc{Concentration for Euclidean TSP}]\label{thm:conc_tsp}
Let $d\ge3$ and $1\le p<d^2/2$.  There are
$\theta=\theta(p,d)>0$ and $C=C(p,d)<\infty$ such that, for every $n\ge4$
and $\lambda>0$,
\[
\PP\!\left(\left|\cC^p_{TS}(\mathbf X)-\EE[\cC^p_{TS}(\mathbf X)]\right|
 \ge\lambda n^{1-p/d}\right)
\le Cn^{-\theta}\lambda^{-2}.
\]
\end{theorem}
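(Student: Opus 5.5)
The plan follows the proof of Theorem~\ref{thm:conc_matching} step by step. Write $C_p=\cC^p_{TS}(\mathbf X)$. As a minimum of finitely many locally Lipschitz functions it is differentiable a.e. At a point of differentiability, with optimal tour $\tau$, the gradient in $x_i$ is the sum of the two contributions from the two tour edges incident to $x_i$. Hence
\[
|\nabla C_p|^2\le 4p^2\sum_{e\in\tau}|e|^{2p-2}.
\]
For $1\le p\le 2$ we use H\"older with $q=2p-2$ together with \eqref{eq:tsp_annealed}, which gives $\EE|\nabla C_p|^2\lesssim n^{1-q/d}$. Lemma~\ref{lem:cube_poincare} and Chebyshev then give the tail bound $\lambda^{-2}n^{-1+2/d}$. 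For $p>2$ we bound $|\nabla C_p|^2\lesssim C_pL^{p-2}$, where $L$ is the maximal tour edge. The tour that is optimal for exponent $2p$ is admissible for exponent $p$, so Cauchy--Schwarz gives $C_p^2\le n\,\cC^{2p}_{TS}$. Together with \eqref{eq:tsp_annealed} this yields the analogue of \eqref{eq:proof_conc_step_1}:
\[
\PP\bigl(|C_p-\EE C_p|\ge\lambda n^{1-p/d}\bigr)\lesssim\lambda^{-2}n^{p/d-1}(\EE L^{2(p-2)})^{1/2}.
\]

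Everything then reduces to the maximal-edge bound $L\lesssim n^{-\alpha/d}$ for every $\alpha<p/(p+d)$, holding outside an event of arbitrarily high polynomial decay. Given that bound, the exponent computation is the same as in the matching case and gives the condition $2p<d^2$. As before, we take $A_n$ from Lemma~\ref{lem:occupancy} and $B_n=\{C_p\le n^{1-\alpha'p/d}\}$. The bound $\PP(B_n^c)\lesssim n^{(\alpha'-1)ps/d}$ follows from $C_p^s\le n^{s-1}\cC^{ps}_{TS}$ and Markov's inequality.

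The main new ingredient is a TSP version of Lemma~\ref{lem:l_infty_to_loc}, and I expect this to be the main obstacle. Take a tour edge $e=(x_a,x_b)$ of length $\ell$ with midpoint $z$, and let $x_j\in B(z,\epsilon\ell)$ with tour successor $x_{j'}$. A $2$--opt move removes $(x_a,x_b)$ and $(x_j,x_{j'})$. To keep a Hamiltonian cycle it must reconnect them as $(x_a,x_j),(x_b,x_{j'})$ or as $(x_a,x_{j'}),(x_b,x_j)$, with the choice dictated by orientation; only one of the two is admissible. In either case one new edge joins an endpoint of $e$ to $x_j$ and has length at most $(1/2+\epsilon)\ell$. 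The other new edge joins the remaining endpoint of $e$ to $x_{j'}$ and has length at most $(1/2+\epsilon)\ell+|x_j-x_{j'}|$. The bound on $|x_j-x_{j'}|$ is the point where the orientation issue could bite. However, both endpoints of $e$ are at distance $\ell/2$ from $z$, so the triangle inequality handles both reconnections symmetrically. Optimality then gives
\[
\ell^p+|x_jx_{j'}|^p\le(1+\eta)2\,(1/2+\epsilon)^p\ell^p+C|x_jx_{j'}|^p,
\]
so $\ell^p\lesssim|x_j-x_{j'}|^p$. A degenerate case occurs when $x_j$ is itself an endpoint of $e$ or adjacent to it. These are at most $O(1)$ points, so they are handled by discarding them or by using the predecessor instead. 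Summing over $x_j\in B$, and using that each tour edge is counted at most twice (once from each endpoint), gives
\[
N^{\mathbf X}_B\,\ell^p\lesssim\sum_{x_j\in B}|x_j-x_{\tau\text{-succ}(j)}|^p\le C_p .
\]

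With this lemma the case analysis on $A_n\cap B_n$ is identical to the matching proof. If $\epsilon\ell\le n^{-\alpha/d}$ there is nothing to prove. If $n^{-\alpha/d}<\epsilon\ell\le r_*$, we get $n\ell^{p+d}\lesssim C_p$, hence $\ell\lesssim n^{-\alpha/d}$. If $\epsilon\ell>r_*$, this contradicts $B_n$ for large $n$. Since the cube has bounded diameter and the exceptional events have arbitrarily high polynomial decay, $(\EE L^{2(p-2)})^{1/2}\lesssim n^{-\alpha(p-2)/d}$. Choosing $\alpha$ close to $p/(p+d)$ then gives a positive $\theta$ exactly when $2p<d^2$. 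The restriction $n\ge4$ guarantees that a distinct non-adjacent edge is available for the $2$--opt move.
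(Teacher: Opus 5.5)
Your proposal is correct and follows the paper's proof essentially step by step: the same gradient bound, the same H\"older/Cauchy--Schwarz split at $p=2$, the same events $A_n$ and $\{C_p\le n^{1-\alpha'p/d}\}$, and your TSP edge-to-energy estimate is precisely Lemma~\ref{lem:tsp_edge_to_loc}. The only cosmetic difference is in choosing the edge at $x_j$: you default to the successor edge and fall back to the predecessor when it touches $e$, while the paper directly takes an incident edge of $u$ disjoint from $e$ (available since $n\ge4$); both give the same multiplicity bound of two.
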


A $2$--opt move removes two tour edges and reconnects the resulting paths to
obtain another Hamiltonian cycle.

\begin{lemma}[$2$--opt inequality]\label{lem:2opt_tsp}
Let $\tau$ be an optimiser in \eqref{eq:TSP_def}.  For two distinct tour
edges, written in cyclic order as
$(x_{\tau(a)},x_{\tau(a+1)})$ and
$(x_{\tau(b)},x_{\tau(b+1)})$, one has
\begin{equation}\label{eq:2opt_tsp}
|x_{\tau(a)}-x_{\tau(a+1)}|^p+|x_{\tau(b)}-x_{\tau(b+1)}|^p
\le |x_{\tau(a)}-x_{\tau(b)}|^p
   +|x_{\tau(a+1)}-x_{\tau(b+1)}|^p.
\end{equation}
\end{lemma}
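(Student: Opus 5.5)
We argue by contradiction: if \eqref{eq:2opt_tsp} failed, a $2$--opt move would produce a Hamiltonian cycle of strictly smaller $p$--cost than $\tau$, contradicting \eqref{eq:TSP_def}. Concretely, write the optimal tour as the cyclic sequence $\tau(1),\dots,\tau(n)$, and assume without loss of generality (after a cyclic relabelling) that $1\le a<b\le n$. The two edges are distinct, so $a\neq b$. Removing the edges $\{\tau(a),\tau(a+1)\}$ and $\{\tau(b),\tau(b+1)\}$ splits the cycle into two paths:
\[
P_1=\tau(a+1),\tau(a+2),\dots,\tau(b),
\qquad
P_2=\tau(b+1),\dots,\tau(n),\tau(1),\dots,\tau(a).
\]
We reconnect them by reversing $P_1$. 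That is, we consider the new cyclic ordering
\[
\tau'=\bigl(\tau(1),\dots,\tau(a),\tau(b),\tau(b-1),\dots,\tau(a+1),\tau(b+1),\dots,\tau(n)\bigr).
\]

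Next we verify that $\tau'$ is a Hamiltonian cycle. It lists every index exactly once, because it is a permutation of the same sequence. Its edges are exactly the edges of $\tau$ with the two removed ones replaced by $\{\tau(a),\tau(b)\}$ and $\{\tau(a+1),\tau(b+1)\}$. The edges inside the reversed block are the same unordered pairs as before, and the cost $|x-y|^p$ is symmetric, so they contribute the same amount. Hence
\[
L_p(\tau';\mathbf x)-L_p(\tau;\mathbf x)
=|x_{\tau(a)}-x_{\tau(b)}|^p+|x_{\tau(a+1)}-x_{\tau(b+1)}|^p
-|x_{\tau(a)}-x_{\tau(a+1)}|^p-|x_{\tau(b)}-x_{\tau(b+1)}|^p .
\]
Optimality of $\tau$ gives $L_p(\tau';\mathbf x)\ge L_p(\tau;\mathbf x)$, and this is exactly \eqref{eq:2opt_tsp}.

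The only point requiring care is degenerate configurations. If the two edges are adjacent, i.e.\ $b=a+1$ (or $a=b+1$ cyclically), then the reversed block is a single vertex and $\tau'=\tau$. In that case one added edge is $\{\tau(a+1),\tau(a+1)\}$, of length zero, and the other coincides with a removed edge, so \eqref{eq:2opt_tsp} holds trivially. In fact it reduces to an identity up to a nonnegative term. Since $n\ge4$, non-adjacent pairs of edges exist, but we do not need this for the argument. Beyond this bookkeeping of cyclic indices there is no real obstacle; the argument is the same swap principle as in Lemma~\ref{lem:2opt_matching}.
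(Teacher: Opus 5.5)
Your proof is correct and is essentially the paper's argument: you delete the two edges, reconnect $\tau(a)$ to $\tau(b)$ and $\tau(a+1)$ to $\tau(b+1)$ by reversing one of the two paths, and invoke optimality of $\tau$. There is one slip in your degenerate case: for adjacent edges the added pairs $\{\tau(a),\tau(b)\}$ and $\{\tau(a+1),\tau(b+1)\}$ coincide exactly with the two removed edges, so \eqref{eq:2opt_tsp} is an equality, as the paper says (no zero-length edge arises, since that would require $a=b$, and your stated accounting would in fact put the leftover nonnegative term on the wrong side); your main computation already covers this case, since $\tau'$ is then $\tau$ or its reversal.
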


\begin{proof}
If the two edges are disjoint, deleting them leaves two paths.  Adding
$(x_{\tau(a)},x_{\tau(b)})$ and
$(x_{\tau(a+1)},x_{\tau(b+1)})$ joins the paths into one Hamiltonian cycle;
optimality gives \eqref{eq:2opt_tsp}.  If the edges are adjacent, the displayed
reconnection merely restores the two edges with reversed orientation, so the
inequality is equality.
\end{proof}

\begin{lemma}[Local edge-to-energy control for TSP]\label{lem:tsp_edge_to_loc}
Let $p>1$ and $n\ge4$.  There are $\epsilon=\epsilon(p)\in(0,1/2)$ and
$C=C(p)$ with the following property.  For an optimal tour $\tau$, fix
\[
e=(x_{\tau(a)},x_{\tau(a+1)}),\quad
\ell=|x_{\tau(a+1)}-x_{\tau(a)}|,
\quad B_e=B\!\left(\tfrac{x_{\tau(a)}+x_{\tau(a+1)}}2,\epsilon\ell\right).
\]
Let $\mathcal E_\tau(B_e)$ be the set of tour edges having at least one
endpoint in $B_e$.  Then
\begin{equation}\label{eq:tsp_edge_to_loc_energy}
N^{\mathbf x}_{B_e\cap\cube}\ell^p
\le C\sum_{f\in\mathcal E_\tau(B_e)}|f|^p.
\end{equation}
\end{lemma}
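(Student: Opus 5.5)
We follow the proof of Lemma~\ref{lem:l_infty_to_loc}, with the bipartite swap replaced by the $2$--opt move of Lemma~\ref{lem:2opt_tsp}. Write $u=x_{\tau(a)}$, $v=x_{\tau(a+1)}$ and $z=(u+v)/2$, so that $|u-z|=|v-z|=\ell/2$.

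\textbf{Reduction.} If $B_e$ contains no sample point there is nothing to prove. Otherwise it suffices to show the pointwise bound
\begin{equation*}
\ell^p\le C\,|f_w|^p \qquad\text{for every sample point } w\in B_e,
\end{equation*}
where $f_w\in\mathcal E_\tau(B_e)$ is a suitably chosen tour edge incident to $w$. Each edge of $\mathcal E_\tau(B_e)$ has at most two endpoints, so it is selected for at most two points $w$. Summing over $w\in B_e$ therefore gives \eqref{eq:tsp_edge_to_loc_energy} with constant $2C$.

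\textbf{Choice of $f_w$.} Fix $w=x_{\tau(b)}\in B_e$. The points $u,v$ themselves lie outside $B_e$, since $|u-z|=\ell/2>\epsilon\ell$ for $\epsilon<1/2$. Hence $w\neq u,v$, and this is where $n\ge4$ is convenient. The successor edge $f=(x_{\tau(b)},x_{\tau(b+1)})$ is then a tour edge distinct from $e$. It may be adjacent to $e$ only when $x_{\tau(b+1)}=u$; in that case we use the predecessor edge instead, with the orientation chosen so the reconnection is a genuine $2$--opt move. If $n=3$-type degeneracies are excluded, at least one of the two edges at $w$ is disjoint from $e$, because $w$ has two distinct neighbours and only one of them can be $u$ in the relevant orientation. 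I will set up the orientation carefully so that Lemma~\ref{lem:2opt_tsp} applies in the form
\begin{equation*}
\ell^p+|w-w'|^p\le |u-w|^p+|v-w'|^p,
\end{equation*}
where $w'$ is the other endpoint of $f$. I expect this bookkeeping to be the main obstacle: identifying which endpoint of $e$ is paired with $w$ and which with $w'$, and handling the adjacent cases. If the cyclic-order convention pairs $u$ with $w'$ and $v$ with $w$, the argument below is symmetric.

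\textbf{Triangle inequalities and absorption.} Since $w\in B_e$,
\begin{equation*}
|u-w|\le|u-z|+|z-w|\le(\tfrac12+\epsilon)\ell,
\qquad
|v-w'|\le|v-z|+|z-w|+|w-w'|\le(\tfrac12+\epsilon)\ell+|f|.
\end{equation*}
Apply $(a+b)^p\le(1+\eta)a^p+C_{\eta,p}b^p$ to the second bound and insert both into the $2$--opt inequality. This yields
\begin{equation*}
\ell^p\bigl(1-(2+\eta)(\tfrac12+\epsilon)^p\bigr)\le C_{\eta,p}|f|^p.
\end{equation*}
Because $p>1$, we have $2(1/2)^p=2^{1-p}<1$. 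We can therefore choose $\epsilon=\epsilon(p)$ and $\eta=\eta(p)$ small enough that the bracket is positive. This gives $\ell^p\le C|f|^p$, which is the pointwise bound required in the reduction.
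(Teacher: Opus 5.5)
Your proposal is correct and follows the paper's proof essentially verbatim. You select an edge at $w\in B_e$ that is disjoint from $e$ (possible since $n\ge4$), apply the $2$--opt inequality, use the midpoint triangle bounds, absorb via $(a+b)^p\le(1+\eta)a^p+C_{\eta,p}b^p$, and sum with multiplicity at most two. The orientation bookkeeping you flag is harmless: any admissible reconnection joins $w$ to one endpoint of $e$ and $w'$ to the other, and both endpoints lie at distance $\ell/2$ from the midpoint, so the same bounds hold either way.
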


\begin{proof}
The assertion is trivial if $\ell=0$.  Fix a vertex
$u=x_{\tau(b)}\in B_e$.  Since $\epsilon<1/2$, $u$ is neither endpoint of
$e$.  Of the two edges incident to $u$, at least one, say $f=(u,v)$ after
choosing its orientation, is disjoint from $e$.  Apply
Lemma~\ref{lem:2opt_tsp} to $e$ and $f$, orienting the cyclic order so that
the inserted edges join $x_{\tau(a)}$ to $u$ and $x_{\tau(a+1)}$ to $v$.
Optimality gives
\[
\ell^p+|u-v|^p
\le |x_{\tau(a)}-u|^p+|x_{\tau(a+1)}-v|^p.
\]
Writing $z_e$ for the midpoint of $e$, the triangle inequality gives
\[
|x_{\tau(a)}-u|\le(\tfrac12+\epsilon)\ell,\qquad
|x_{\tau(a+1)}-v|\le(\tfrac12+\epsilon)\ell+|u-v|.
\]
For any $\eta>0$, $(s+t)^p\le(1+\eta)s^p+C_{\eta,p}t^p$.
Consequently
\[
\ell^p+|u-v|^p
\le(2+\eta)(\tfrac12+\epsilon)^p\ell^p+C_{\eta,p}|u-v|^p.
\]
Because $p>1$, first choose $\epsilon$ and then $\eta$ so that
$(2+\eta)(1/2+\epsilon)^p<1$.  Absorption yields
$\ell^p\le C|f|^p$.  Summing over $u\in B_e$ proves
\eqref{eq:tsp_edge_to_loc_energy}: each tour edge can be selected by at most
its two endpoints, so its multiplicity is at most two.
\end{proof}

\begin{proof}[Proof of Theorem~\ref{thm:conc_tsp}]
Write $T_p=\cC^p_{TS}(\mathbf X)$ and let $\tau$ be an optimal tour.  The
functional is locally Lipschitz and hence differentiable at a.e.\ configuration.
At such a point, the derivative at a vertex is the sum of the derivatives of
its two incident edges.  The inequality $|a+b|^2\le2|a|^2+2|b|^2$ and the
fact that every edge has two endpoints give
\begin{equation}\label{eq:tsp-gradient}
|\nabla T_p|^2\lesssim_p\sum_{e\in\tau}|e|^{2p-2}.
\end{equation}
For $1\le p\le2$, put $q=2p-2$.  H\"older and
\eqref{eq:tsp_annealed} yield
\[
\EE\left[\sum_{e\in\tau}|e|^q\right]
\le n^{1-q/p}(\EE T_p)^{q/p}\lesssim n^{1-q/d},
\]
with the direct bound $n$ when $p=1$.  Poincar\'e and Chebyshev then give
$\PP(|T_p-\EE[T_p]|\ge\lambda n^{1-p/d})\lesssim
\lambda^{-2}n^{-1+2/d}$.

Assume now $p>2$ and let $L=\max_{e\in\tau}|e|$.  From
\eqref{eq:tsp-gradient}, $|\nabla T_p|^2\lesssim T_pL^{p-2}$.  Choosing an
optimal tour for exponent $2p$ gives $T_p^2\le n\cC^{2p}_{TS}$, and hence
\begin{equation}\label{eq:tsp-poincare-step}
\PP(|T_p-\EE[T_p]|\ge\lambda n^{1-p/d})
\lesssim\lambda^{-2}n^{p/d-1}(\EE[L^{2(p-2)}])^{1/2}.
\end{equation}
Here we used \eqref{eq:tsp_annealed}, Lemma~\ref{lem:cube_poincare}, and
Cauchy--Schwarz exactly as in \eqref{eq:matching-cs}.

Fix $\alpha<p/(p+d)$, put $\alpha'=\alpha(p+d)/p$, and let $A_n$ be the
event of Lemma~\ref{lem:occupancy}.  Set
$B_n^{TS}=\{T_p\le n^{1-\alpha'p/d}\}$.  For each integer $s>1$,
$T_p^s\le n^{s-1}\cC^{ps}_{TS}$; therefore \eqref{eq:tsp_annealed} and
Markov give $\PP((B_n^{TS})^c)\lesssim n^{(\alpha'-1)ps/d}$.
On $A_n\cap B_n^{TS}$, Lemma~\ref{lem:tsp_edge_to_loc} and the bounded
multiplicity in its right-hand side imply, for every edge of length $\ell$
with $n^{-\alpha/d}<\epsilon\ell\le r_*$,
\[
n \ell^{p+d}\lesssim\sum_{f\in\mathcal E_\tau(B_e)}|f|^p
\le2T_p\le2n^{1-\alpha'p/d}.
\]
If $\epsilon \ell>r_*$, applying occupancy at $r_*$ instead gives
$nr_*^d\ell^p\lesssim T_p$, contradicting the last bound for large $n$.
Thus $L\lesssim n^{-\alpha/d}$ off an event of arbitrarily high polynomial
decay.  Since all edges are bounded by $\sqrt d$, we obtain
$(\EE [L^{2(p-2)}])^{1/2}\lesssim n^{-\alpha(p-2)/d}$.
Substitution into \eqref{eq:tsp-poincare-step} yields the exponent
$p/d-1-\alpha(p-2)/d$, which can be made negative exactly when
$2p<d^2$.  This proves the theorem.
\end{proof}

\subsection{Bipartite case}

Let $\mathbf x=(x_i)_{i=1}^n$ and $\mathbf y=(y_j)_{j=1}^n$.  A bipartite
Hamiltonian cycle is a Hamiltonian cycle in the complete bipartite graph with
vertex classes $\{x_1,\ldots,x_n\}$ and $\{y_1,\ldots,y_n\}$.  Denote the
set of these alternating cycles by $\mathcal H_{\rm alt}(\mathbf x,\mathbf y)$
and define
\begin{equation}\label{eq:bTSP_def}
\cC^p_{bTS}(\mathbf x,\mathbf y)
=\min_{\gamma\in\mathcal H_{\rm alt}(\mathbf x,\mathbf y)}
  \sum_{e\in\gamma}|e|^p.
\end{equation}
Equivalently, a cycle has the form
$x_{\pi(1)}\to y_{\sigma(1)}\to x_{\pi(2)}\to\cdots\to
y_{\sigma(n)}\to x_{\pi(1)}$ for two permutations $\pi,\sigma$, modulo
cyclic symmetry.  In particular, the order of the $x$--vertices is not fixed.

For later use we record the annealed upper bound
\begin{equation}\label{eq:btsp-annealed}
\EE[\cC^q_{bTS}(\mathbf X,\mathbf Y)]\lesssim_{d,q}n^{1-q/d},
\qquad d\ge3, q\ge1.
\end{equation}
Indeed, take an optimal $q$--matching $x_i\leftrightarrow y_{\sigma(i)}$
and an optimal $q$--TSP order $\pi$ of the $x$--points.  The alternating
cycle $x_{\pi(k)}\to y_{\sigma(\pi(k))}\to x_{\pi(k+1)}$ has, by
$(a+b)^q\le2^{q-1}(a^q+b^q)$, cost at most a constant times the sum of the
matching and monopartite-TSP $q$--costs.  Taking expectations and using
\eqref{eq:match_bound} and \eqref{eq:tsp_annealed} proves
\eqref{eq:btsp-annealed}.  This verifies the scaling input for precisely the
standard alternating model in \eqref{eq:bTSP_def}; the low-dimensional exact
computations in \cite{caracciolo2018solution,capelli2018exact} are not used.

\begin{theorem}[\textsc{Concentration for bipartite TSP}]\label{thm:conc_bTSP}
Let $d\ge3$ and $1\le p<d^2/2$.  There are
$\theta=\theta(p,d)>0$ and $C=C(p,d)<\infty$ such that, for all $n\ge3$ and
$\lambda>0$,
\begin{equation}\label{eq:concen_bTSP}
\PP\!\left(\left|\cC^p_{bTS}(\mathbf X,\mathbf Y)
-\EE[\cC^p_{bTS}(\mathbf X,\mathbf Y)]\right|
\ge\lambda n^{1-p/d}\right)
\le Cn^{-\theta}\lambda^{-2}.
\end{equation}
\end{theorem}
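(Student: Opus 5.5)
The plan is to repeat the monopartite argument of Theorem~\ref{thm:conc_tsp} on the product space $(\cube)^{2n}$. There are three steps: a gradient bound, a Poincar\'e--Chebyshev reduction to a moment of the maximal edge, and an $L^\infty$ bound on edges coming from a bipartite $2$--opt inequality and occupancy.

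\textbf{Step 1: Poincar\'e reduction.} Write $T=\cC^p_{bTS}(\mathbf X,\mathbf Y)$. It is a minimum of finitely many locally Lipschitz functions, so it is differentiable a.e. At such points, each $x$- or $y$-vertex has exactly two incident edges in an optimal cycle $\gamma$. Hence
\[
|\nabla T|^2\lesssim_p\sum_{e\in\gamma}|e|^{2p-2}.
\]
For $1\le p\le 2$, H\"older's inequality together with \eqref{eq:btsp-annealed} gives the bound $\lambda^{-2}n^{-1+2/d}$, via Lemma~\ref{lem:cube_poincare} with $m=2n$.

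For $p>2$, bound $|\nabla T|^2\lesssim T L^{p-2}$, where $L$ is the longest edge. Choosing a $2p$--optimal alternating cycle and applying Cauchy--Schwarz gives $T^2\le 2n\,\cC^{2p}_{bTS}$. Using \eqref{eq:btsp-annealed} with $q=2p$, this yields the analogue of \eqref{eq:tsp-poincare-step}:
\[
\PP\bigl(|T-\EE T|\ge\lambda n^{1-p/d}\bigr)\lesssim\lambda^{-2}n^{p/d-1}(\EE L^{2(p-2)})^{1/2}.
\]

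\textbf{Step 2: bipartite $2$--opt and local control.} Take two disjoint edges of $\gamma$, say $e=(x,y)$ and $f=(x',y')$, each oriented from its $x$-endpoint to its $y$-endpoint along the cycle. Removing them leaves two alternating paths. The endpoints of these paths are $y$ and $x'$ on one path, and $y'$ and $x$ on the other (up to the cyclic orientation). Reconnecting them via $(x,x')$ would violate bipartiteness, so the admissible reconnection is $(x,y')$ together with $(x',y)$. I expect this to be the main obstacle: I must check, using the alternation, that this reconnection always yields a single Hamiltonian cycle and not two cycles. Orienting the cycle consistently, the sequence reads $x\to y\to\cdots\to x'\to y'\to\cdots\to x$. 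Replacing the edges by $x\to x'$ traversed backwards is not allowed, so the valid swap is: traverse $x\to y'$ wait—more precisely, take the path $y\cdots x'$ reversed, giving $x\to x'\cdots y\to$? That is not alternating. The correct choice is the new edges $(x,x')$-type forbidden. Instead use the standard $2$--opt that reverses a segment: new edges $(x,y')$ are impossible in the single-cycle sense. I will therefore pick $e$ and $f$ so that the reconnection $(x,x'),(y,y')$ corresponds to the opposite orientation class of $f$. That is, choose $f=(y',x')$, whose $y$-endpoint precedes its $x$-endpoint in the orientation. Then segment reversal gives the edges $(x,y')$ and $(y,x')$, which are bipartite and form a single cycle. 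So for the local lemma I fix an edge $e$ and a vertex $u$ in $B_e$ lying in the same class as, say, the class of $x$. Of $u$'s two incident edges, exactly one has the orientation class needed for this swap. The triangle-inequality argument then goes through unchanged, since $u$ is joined to an endpoint of $e$ of the opposite class, which lies at distance at most $(\tfrac12+\epsilon)\ell$. Occupancy should be applied separately to the $x$-points or the $y$-points in $B_e$. Each edge is counted at most twice, which gives $N^{\mathbf X}_{B_e}\ell^p\lesssim\sum_{f\in\mathcal E_\gamma(B_e)}|f|^p$.

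\textbf{Step 3: conclusion.} Intersect the event $A_n$ of Lemma~\ref{lem:occupancy} for $\mathbf X$ with the event $B_n=\{T\le n^{1-\alpha' p/d}\}$. The probability of $B_n^c$ decays at any polynomial rate, via $T^s\le (2n)^{s-1}\cC^{ps}_{bTS}$ and \eqref{eq:btsp-annealed}. The same three-case argument as before gives $L\lesssim n^{-\alpha/d}$ for every $\alpha<p/(p+d)$. Substituting this into Step~1 yields the exponent $p/d-1-\alpha(p-2)/d<0$, which is achievable precisely when $2p<d^2$.
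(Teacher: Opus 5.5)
Your proposal is correct and follows the paper's proof essentially step for step: the same Poincar\'e--Chebyshev reduction on $(\cube)^{2n}$, the same events $A_n$ (for the $\mathbf X$-cloud) and $\{T\le n^{1-\alpha'p/d}\}$ with the same exponent count, and in Step~2 your final choice (take $f$ to be the incoming edge $(y',u)$ of $u$, and replace it together with $e$ by $(x,y')$ and $(y,u)$) is exactly the paper's admissible alternating reconnection, Lemma~\ref{lem:btsp-reconnect}. In a write-up, delete the earlier false start: with both edges oriented $x\to y$, replacing them by $(x,y')$ and $(x',y)$ closes two separate cycles. Also treat the case $y'=y$, where the selected edge is adjacent to $e$; there the swap inequality is a trivial equality, or, as the paper does, $|u-y|\ge(\tfrac12-\epsilon)\ell$ gives the bound directly.
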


\begin{lemma}[Admissible alternating reconnection]\label{lem:btsp-reconnect}
Let $\gamma$ be an alternating Hamiltonian cycle and let $e=(x_i,y_j)$ be
one of its edges.  For any $x_k\ne x_i$, consider the two edges incident to
$x_k$.  Among the two alternating reconnections involving $e$ and one of
those incident edges, one produces a single alternating Hamiltonian cycle.
\end{lemma}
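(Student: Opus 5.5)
We parametrise $\gamma$ as a cyclic sequence of vertices and orient it so that $e$ is traversed as $x_i\to y_j$. Since $\gamma$ alternates, $x_k$ appears at some position with predecessor $y_a$ and successor $y_b$, so its two incident edges are $(y_a,x_k)$ and $(x_k,y_b)$. Because $x_k\ne x_i$, neither edge coincides with $e$. Each is vertex-disjoint from $e$ unless it shares the endpoint $y_j$. This happens exactly when $x_k$ is the successor of $y_j$, in which case $y_a=y_j$.

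The key observation is the standard $2$--opt fact. Remove two disjoint edges $u\to u'$ and $w\to w'$, listed in the traversal order of the cycle. Then the reconnection $u$--$w$, $u'$--$w'$ yields a single Hamiltonian cycle, obtained by reversing the segment $u'\cdots w$. The other reconnection, $u$--$w'$, $u'$--$w$, splits the cycle into two cycles. I would apply this to $e$ together with the incident edge that follows the orientation, namely $f=(x_k\to y_b)$.

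Suppose first that $f$ is disjoint from $e$, i.e.\ $y_b\ne y_j$. The cycle-preserving reconnection joins $x_i$ to $x_k$ and $y_j$ to $y_b$, which is not bipartite. The crossing reconnection $x_i$--$y_b$, $y_j$--$x_k$ is bipartite but produces two cycles. So the edge $f$ does not work, and we must instead use the incoming edge $f'=(y_a\to x_k)$. Removing $x_i\to y_j$ and $y_a\to x_k$ (disjoint provided $y_a\ne y_j$), the cycle-preserving reconnection is $x_i$--$y_a$ and $y_j$--$x_k$. Both new edges join an $x$-vertex to a $y$-vertex, so the result is a single alternating Hamiltonian cycle. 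This is the generic case.

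The main obstacle is the degenerate case $y_a=y_j$, where $x_k$ directly follows $y_j$ and $f'$ shares the vertex $y_j$ with $e$. I would handle it directly. The reconnection then reinserts the same edges $x_i$--$y_j$ and $y_j$--$x_k$, so $\gamma$ is unchanged and is trivially a single alternating Hamiltonian cycle. This matches the way adjacent edges were treated in Lemma~\ref{lem:2opt_tsp}, where the corresponding inequality became an equality. If a genuinely different move is wanted for the later edge-to-energy estimate, one can use $f=(x_k\to y_b)$ instead: it is disjoint from $e$ because $n\ge3$ gives $y_b\ne y_j$. However, the parity argument above shows that its bipartite reconnection splits the cycle. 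So the honest statement is that in this case the admissible reconnection is the trivial one. For the downstream inequality this is harmless, since the $2$--opt inequality still holds, with equality.

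Summing up, the lemma follows by choosing the incoming edge at $x_k$, relative to the orientation in which $e$ runs from $x_i$ to $y_j$, and checking the orientation/parity bookkeeping above. The only delicate point is tracking which reconnection preserves connectivity versus bipartiteness.
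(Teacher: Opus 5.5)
Your proof is correct and matches the paper's argument: you orient $\gamma$ so that $e$ runs $x_i\to y_j$ and select the edge $(y_a,x_k)$ entering $x_k$. You then join the two remaining paths into one alternating cycle via $(y_j,x_k)$ and $(y_a,x_i)$, and treat $y_a=y_j$ as the trivial reconnection that restores $\gamma$. Your parity check that the outgoing edge's bipartite reconnection splits the cycle is the paper's closing remark, and there you do not need $n\ge3$: $y_b\neq y_j$ always holds, since the predecessor of $y_j$ is $x_i\neq x_k$.
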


\begin{proof}
Orient $\gamma$ so that $e$ is traversed from $x_i$ to $y_j$.  Exactly one
edge incident to $x_k$ is traversed into $x_k$; write it $(y_a,x_k)$.  Delete
$(x_i,y_j)$ and $(y_a,x_k)$.  If $y_a=y_j$, these edges are adjacent and the
reconnection simply restores the original cycle.  Otherwise, what remains consists of the two paths from
$y_j$ to $y_a$ and from $x_k$ to $x_i$.  Adding $(y_j,x_k)$ and $(y_a,x_i)$
joins the two paths into one cycle.  Both new edges join unlike colour
classes, every vertex again has degree two, and all vertices lie in the
joined component.  The result is therefore an alternating Hamiltonian cycle.
The other incident edge may close the two paths separately, which explains
why the choice is needed.
\end{proof}

\begin{figure}[ht]
\centering
\begin{subfigure}{.46\textwidth}\centering
\begin{tikzpicture}[x=1cm,y=.8cm]
\tikzstyle{xpt}=[circle,fill=blue!65,inner sep=1.5pt]
\tikzstyle{ypt}=[circle,fill=orange!85!black,inner sep=1.5pt]
\node[xpt,label=below:$x_i$] (xi) at (0,0) {};
\node[ypt,label=above:$y_j$] (yj) at (1,1) {};
\node[ypt,label=above:$y_a$] (ya) at (3,1) {};
\node[xpt,label=below:$x_k$] (xk) at (4,0) {};
\draw[very thick,dashed] (xi)--(yj); \draw[very thick,dashed] (ya)--(xk);
\draw[thick] (yj) to[bend left=20] (ya); \draw[thick] (xk) to[bend left=25] (xi);
\end{tikzpicture}
\caption{Cut the two oriented edges.}
\end{subfigure}\hfill
\begin{subfigure}{.46\textwidth}\centering
\begin{tikzpicture}[x=1cm,y=.8cm]
\tikzstyle{xpt}=[circle,fill=blue!65,inner sep=1.5pt]
\tikzstyle{ypt}=[circle,fill=orange!85!black,inner sep=1.5pt]
\node[xpt,label=below:$x_i$] (xi) at (0,0) {};
\node[ypt,label=above:$y_j$] (yj) at (1,1) {};
\node[ypt,label=above:$y_a$] (ya) at (3,1) {};
\node[xpt,label=below:$x_k$] (xk) at (4,0) {};
\draw[thick] (yj) to[bend left=20] (ya); \draw[thick] (xk) to[bend left=25] (xi);
\draw[very thick,dotted,red] (yj)--(xk); \draw[very thick,dotted,red] (ya)--(xi);
\end{tikzpicture}
\caption{The selected reconnection is one cycle.}
\end{subfigure}
\caption{The admissible alternating reconnection in
Lemma~\ref{lem:btsp-reconnect}.}
\label{fig:btsp-reconnection}
\end{figure}

For an optimal $\gamma$, the admissible reconnection in
Lemma~\ref{lem:btsp-reconnect} satisfies
\begin{equation}\label{eq:btsp-cost-swap}
|x_i-y_j|^p+|x_k-y_a|^p
\le |x_k-y_j|^p+|x_i-y_a|^p,
\end{equation}
because otherwise the reconnected cycle would have smaller cost.

\begin{lemma}[Local edge-to-energy control for bipartite TSP]
\label{lem:btsp_edge_to_loc_2}
For $p>1$ there are $\epsilon=\epsilon(p)\in(0,1/2)$ and $C=C(p)$ such
that the following holds.  Let $\gamma$ be an optimal alternating cycle and
$e=(x_i,y_j)\in\gamma$.  With
\[
\ell=|x_i-y_j|,\qquad z_e=(x_i+y_j)/2,\qquad
B_e=B(z_e,\epsilon\ell),
\]
one has
\begin{equation}\label{eq:btsp-local-energy}
N^{\mathbf x}_{B_e\cap\cube}\ell^p
\le C\sum_{f\in\gamma:\ f\text{ is selected at some }x_k\in B_e}|f|^p
\le C\sum_{f\in\gamma}|f|^p.
\end{equation}
\end{lemma}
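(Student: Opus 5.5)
The plan is to copy the proof of Lemma~\ref{lem:tsp_edge_to_loc}, replacing the monopartite $2$--opt move with the admissible alternating reconnection of Lemma~\ref{lem:btsp-reconnect}. The goal is a pointwise bound $\ell^p\le C|f|^p$ for one selected edge $f$ at each $x_k\in B_e$. Summing over $x_k$ then gives the first inequality in \eqref{eq:btsp-local-energy}. Each edge $f=(y_a,x_k)$ has exactly one $x$--endpoint, so it is selected by at most one $x_k$ and has multiplicity one. The second inequality is then trivial.

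First I fix $\epsilon<1/2$. If $\ell=0$ there is nothing to prove. Since $|x_i-z_e|=\ell/2>\epsilon\ell$, we have $x_i\notin B_e$, so every $x_k\in B_e$ satisfies $x_k\neq x_i$ and Lemma~\ref{lem:btsp-reconnect} applies. Orient $\gamma$ so that $e$ runs from $x_i$ to $y_j$. The selected edge is $f=(y_a,x_k)$, the edge entering $x_k$.

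If $y_a=y_j$, the reconnection is trivial. In that case I argue directly: $|f|=|x_k-y_j|\ge|y_j-z_e|-|z_e-x_k|\ge(\tfrac12-\epsilon)\ell$, which already gives $\ell^p\le C|f|^p$.

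Otherwise \eqref{eq:btsp-cost-swap} gives
\[
\ell^p+|f|^p\le|x_k-y_j|^p+|x_i-y_a|^p .
\]
The triangle inequality through $z_e$ gives $|x_k-y_j|\le|x_k-z_e|+|z_e-y_j|\le(\tfrac12+\epsilon)\ell$. Similarly, $|x_i-y_a|\le|x_i-z_e|+|z_e-x_k|+|x_k-y_a|\le(\tfrac12+\epsilon)\ell+|f|$. Applying $(s+t)^p\le(1+\eta)s^p+C_{\eta,p}t^p$ to the second term yields
\[
\ell^p+|f|^p\le(2+\eta)(\tfrac12+\epsilon)^p\ell^p+C_{\eta,p}|f|^p .
\]
Since $p>1$, I choose $\epsilon$ and then $\eta$ so that $(2+\eta)(\tfrac12+\epsilon)^p<1$. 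Absorbing the $\ell^p$ term on the left gives $\ell^p\le C|f|^p$.

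The main point to check is that the selected reconnection really has the geometry used above. The new edges must join $x_k$ (near $z_e$) to $y_j$ (an endpoint of $e$), and $x_i$ to $y_a$ (the far end of $f$). This is exactly the admissible choice in Lemma~\ref{lem:btsp-reconnect}, so the triangle estimates go through. The degenerate adjacent case is the one place where the swap inequality is vacuous, and it is handled by the direct bound above.
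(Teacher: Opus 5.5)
Your proposal is correct and follows the paper's proof essentially step for step. Like the paper, you orient $e$ from $x_i$ to $y_j$ and select the edge entering $x_k$. You treat the adjacent case $y_a=y_j$ directly via $|x_k-y_j|\ge(\tfrac12-\epsilon)\ell$, and otherwise absorb the $\ell^p$ term from the admissible swap inequality using $(2+\eta)(\tfrac12+\epsilon)^p<1$. You then sum over $x_k\in B_e$, with multiplicity one because each selected edge has $x_k$ as its unique $x$--endpoint.
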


\begin{proof}
If $\ell=0$ there is nothing to prove.  Fix $x_k\in B_e$.  Since
$\epsilon<1/2$, it is not the endpoint $x_i$.  Orient $e$ from $x_i$ to
$y_j$ and use Lemma~\ref{lem:btsp-reconnect} to select the incident edge
$(y_a,x_k)$.  If $y_a=y_j$, then
\[
|x_k-y_a|=|x_k-y_j|\ge(\tfrac12-\epsilon)\ell,
\]
so directly $\ell^p\le(1/2-\epsilon)^{-p}|x_k-y_a|^p$.
Suppose henceforth that $y_a\ne y_j$.  Inequality
\eqref{eq:btsp-cost-swap} and the midpoint geometry
give
\[
\ell^p+|x_k-y_a|^p
\le |x_k-y_j|^p+|x_i-y_a|^p,
\]
\[
|x_k-y_j|\le(\tfrac12+\epsilon)\ell,\qquad
|x_i-y_a|\le(\tfrac12+\epsilon)\ell+|x_k-y_a|.
\]
Using $(u+v)^p\le(1+\eta)u^p+C_{\eta,p}v^p$ and choosing
$\epsilon,\eta$ so that $(2+\eta)(1/2+\epsilon)^p<1$, we absorb the long-edge
term and obtain $\ell^p\le C|x_k-y_a|^p$.  Summing over the $x_k$ in the
ball proves the first inequality in \eqref{eq:btsp-local-energy}.  Each
$x$--vertex has a unique incoming edge in the chosen orientation, so a tour
edge is selected at most once; the second inequality follows.
\end{proof}

\begin{proof}[Proof of Theorem~\ref{thm:conc_bTSP}]
Write $S_p=\cC^p_{bTS}(\mathbf X,\mathbf Y)$ and choose an optimal alternating
cycle $\gamma$.  At a.e.\ configuration the locally Lipschitz functional is
differentiable.  Each vertex has two incident edges, so
\begin{equation}\label{eq:btsp-gradient}
|\nabla S_p|^2\lesssim_p\sum_{e\in\gamma}|e|^{2p-2}.
\end{equation}
For $1\le p\le2$, with $q=2p-2$, H\"older and
\eqref{eq:btsp-annealed} give
\[
\EE\left[\sum_{e\in\gamma}|e|^q\right]
\le(2n)^{1-q/p}(\EE [S_p])^{q/p}\lesssim n^{1-q/d},
\]
with the direct bound $2n$ at $p=1$.  Poincar\'e and Chebyshev yield the
required bound with exponent $-1+2/d<0$.

Let $p>2$ and $L=\max_{e\in\gamma}|e|$.  Then
$|\nabla S_p|^2\lesssim S_pL^{p-2}$.  Choosing a $2p$--optimal alternating
cycle gives $S_p^2\le2n\cC^{2p}_{bTS}$.  Thus
\begin{equation}\label{eq:btsp-poincare-step}
\PP(|S_p-\EE [S_p]|\ge\lambda n^{1-p/d})
\lesssim\lambda^{-2}n^{p/d-1}(\EE [L^{2(p-2)}])^{1/2},
\end{equation}
by \eqref{eq:btsp-annealed}, Cauchy--Schwarz, and
Lemma~\ref{lem:cube_poincare}.

Fix $\alpha<p/(p+d)$ and $\alpha'=\alpha(p+d)/p$.  Take the event $A_n$
from Lemma~\ref{lem:occupancy} for the $X$--cloud and set
$B_n^{bTS}=\{S_p\le n^{1-\alpha'p/d}\}$.  For every integer $s>1$,
$S_p^s\le(2n)^{s-1}\cC^{ps}_{bTS}$, so \eqref{eq:btsp-annealed} and Markov
give $\PP((B_n^{bTS})^c)\lesssim n^{(\alpha'-1)ps/d}$.  On
$A_n\cap B_n^{bTS}$, Lemma~\ref{lem:btsp_edge_to_loc_2} gives, for an edge
of length $\ell$ with $n^{-\alpha/d}<\epsilon\ell\le r_*$,
\[
n\ell^{p+d}\lesssim S_p\le n^{1-\alpha'p/d}.
\]
For $\epsilon\ell>r_*$, occupancy at radius $r_*$ gives
$nr_*^d\ell^p\lesssim S_p$, which is impossible for large $n$.  Hence
$L\lesssim n^{-\alpha/d}$ outside an event of arbitrarily high polynomial
decay.  Since $L\le\sqrt d$ deterministically,
\[
(\EE [L^{2(p-2)}])^{1/2}\lesssim n^{-\alpha(p-2)/d}.
\]
Inserting this in
\eqref{eq:btsp-poincare-step} leaves
$n^{p/d-1-\alpha(p-2)/d}$.  A permissible $\alpha$ makes this a negative
power exactly when $2p<d^2$, completing the proof.
\end{proof}

\appendix
\section{Numerical simulations}
\label{sec:numerics}

In this section we provide exploratory numerical evidence concerning two
questions:

\begin{itemize}
\item[(i)] whether the $p\to q$ transfer conjecture is compatible with the
observed matching and TSP costs;
\item[(ii)] whether the tested finite sizes display an obvious qualitative
change at $p=d^2/2$.
\end{itemize}

These experiments do not test the fluctuation exponent in our concentration
theorems.  Their purpose is only to probe the $p\to q$ conjecture and the
apparent absence of a singularity at the technical threshold $p=d^2/2$.

All costs are normalized by the expected scaling $r(d,q)(n)=n^{1-q/d}$ ({recall that} we work in $d\ge 3$), so that convergence to a positive constant corresponds to the conjectured behavior. The code used is freely available at the repository 
\url{https://github.com/DarioTrevisan/REMPF}.

\subsection{Numerical evidence for the \texorpdfstring{$p\to q$}{p -> q} conjecture}

We test whether, for a fixed optimiser exponent $p$, the $q$--cost of the $p$--optimal configuration scales as $n^{1-q/d}$ for $q>p$.

\medskip

\paragraph{\bf Monopartite TSP}

Figure~\ref{fig:tsp_d3_p1} shows the normalized costs for $d=3$, $p=1$ and $q=1,2,3$. 
The curves are compatible with convergence to positive constants, although
the finite-size range is too short for a reliable extrapolation.

\begin{figure}[ht!]
\centering
\begin{subfigure}{0.48\textwidth}
    \centering
    \includegraphics[width=\linewidth]{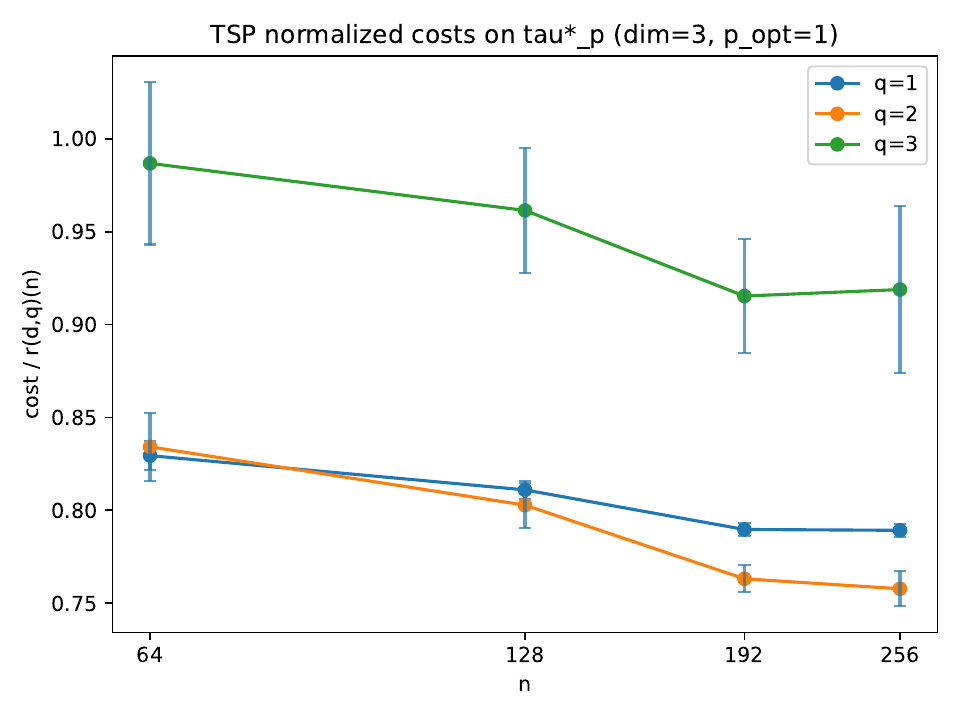}
    \caption{$d=3$, $p=1$, $q=1,2,3$.}
    \label{fig:tsp_d3_p1}
\end{subfigure}
\hfill
\begin{subfigure}{0.48\textwidth}
    \centering
    \includegraphics[width=\linewidth]{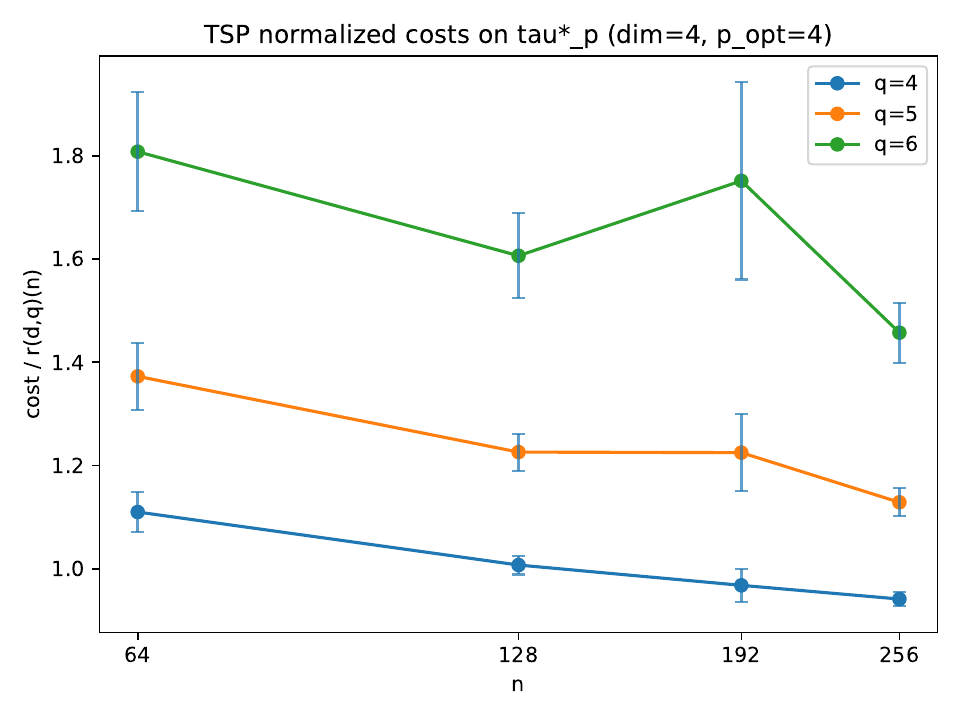}
    \caption{$d=4$, $p=4$, $q=4,5,6$.}
    \label{fig:tsp_d4_p4}
\end{subfigure}
\caption{Monopartite TSP: normalized $q$--costs on $\tau^*_p$.
In both regimes, the curves remain approximately flat after normalization by $n^{1-q/d}$.}
\label{fig:tsp_two_panel}
\end{figure}

\medskip
\paragraph{\bf Bipartite matching.}

We perform the same experiment for bipartite matching.
\begin{figure}[ht!]
\centering
\begin{subfigure}{0.48\textwidth}
    \centering
    \includegraphics[width=\linewidth]{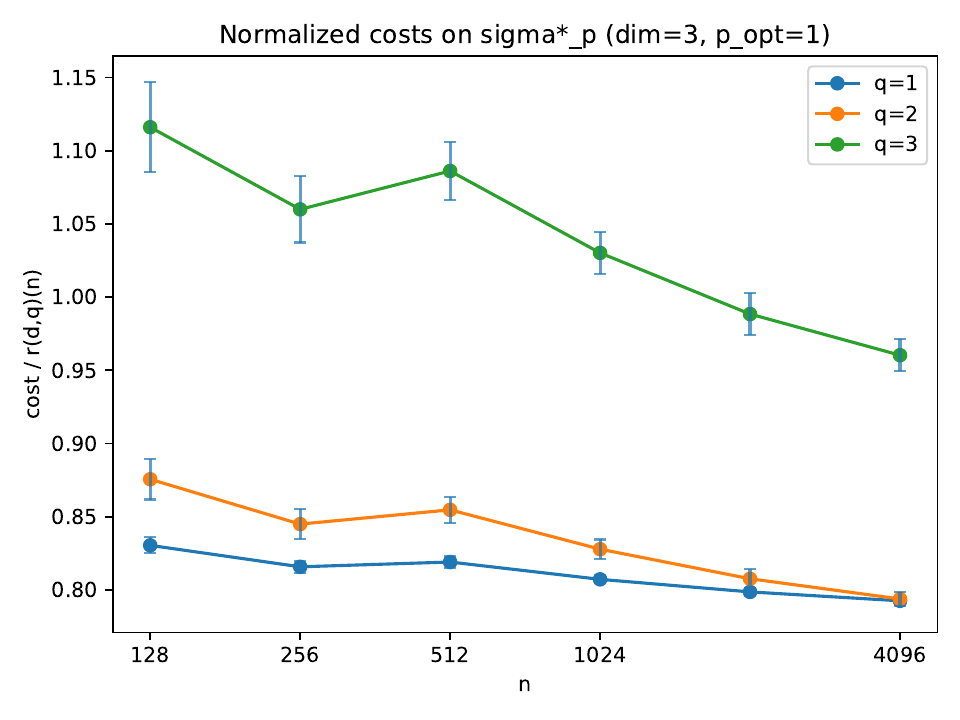}
    \caption{$d=3$, $p=1$, $q=1,2,3$.}
    \label{fig:match_d3_p1}
\end{subfigure}
\hfill
\begin{subfigure}{0.48\textwidth}
    \centering
    \includegraphics[width=\linewidth]{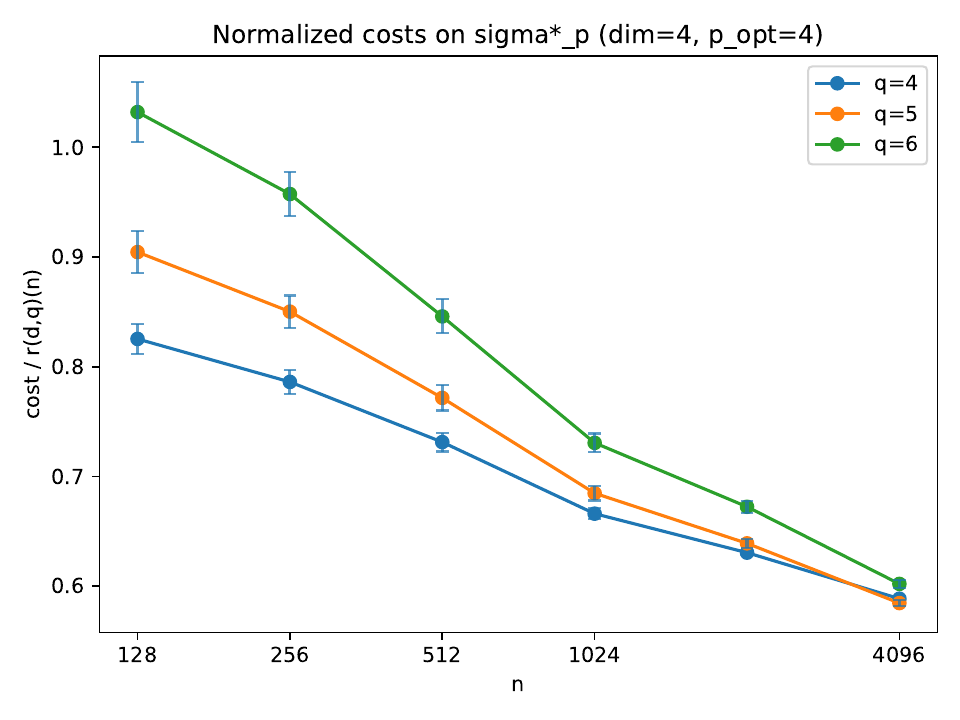}
    \caption{$d=4$, $p=4$, $q=4,5,6$.}
    \label{fig:match_d4_p4}
\end{subfigure}
\caption{Bipartite matching: normalized $q$--costs on $\sigma^*_p$.
The curves remain stable under normalization by $n^{1-q/d}$.}
\label{fig:match_two_panel}
\end{figure}

Figure~\ref{fig:match_d3_p1} corresponds to $d=3$, $p=1$, $q=1,2,3$.
The normalized costs {seems to }stabilize as $n$ grows. Figure~\ref{fig:match_d4_p4} shows the case $d=4$, $p=4$, $q=4,5,6$.
The behavior is again compatible with the conjectured $n^{1-q/d}$ scaling.

Overall, in the tested regimes the normalized $q$--costs appear bounded and
stable.  This is compatible with the $p\to q$ conjecture, {but} should be
interpreted cautiously given the available finite-size range.

\subsection{Behaviour near the threshold \texorpdfstring{$p=d^2/2$}{p = d squared/2}}
We now examine the critical case $d=4$, $p=8$, for which $p=d^2/2$.
The analytical method developed above does not yield concentration at this threshold.

\medskip
\paragraph{\bf Matching and monopartite TSP at the critical exponent.}

Figure~\ref{fig:tsp_d4_p8} shows normalized costs for $d=4$, $p=8$, $q=8,9,10$.
At the theoretical threshold, the normalized curves remain stable over the
tested sizes. Figure~\ref{fig:match_d4_p8} displays the corresponding matching
experiment.  Neither panel displays an obvious qualitative change at
$p=d^2/2$ over this finite range.

\begin{figure}[ht!]
\centering
\begin{subfigure}{0.48\textwidth}
    \centering
    \includegraphics[width=\linewidth]{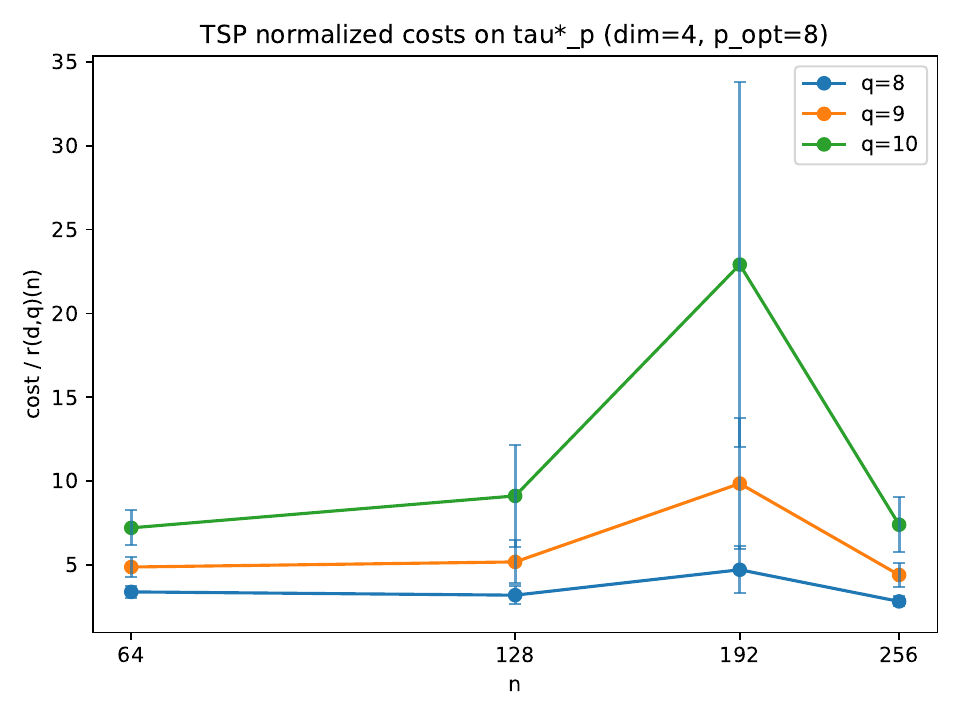}
    \caption{Monopartite TSP, $d=4$, $p=8$.}
    \label{fig:tsp_d4_p8}
\end{subfigure}
\hfill
\begin{subfigure}{0.48\textwidth}
    \centering
    \includegraphics[width=\linewidth]{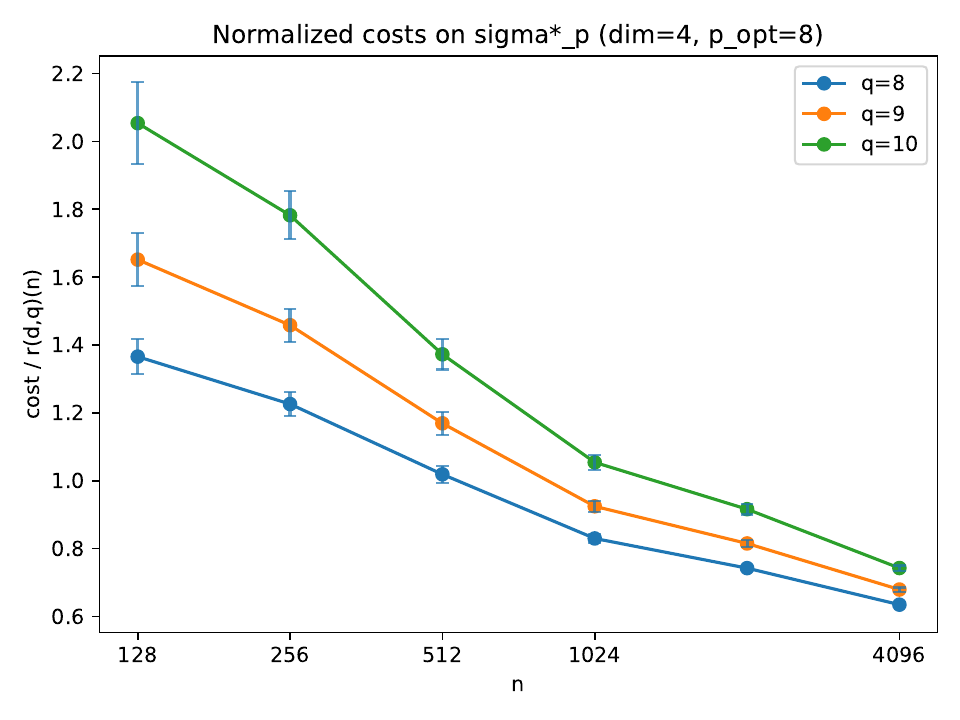}
    \caption{Bipartite matching, $d=4$, $p=8$.}
    \label{fig:match_d4_p8}
\end{subfigure}
\caption{Critical regime $p=d^2/2$ ($d=4$, $p=8$).
No obvious qualitative change is visible over the tested sizes.}
\label{fig:critical_two_panel}
\end{figure}

These observations are compatible with the restriction $p<d^2/2$ being a
limitation of the present maximal-edge/Poincar\'e argument.  They do not rule
out other behaviour outside the tested sizes and do not constitute evidence
for a fluctuation exponent.

\begin{center}
  \FundingLogos
  
  \vspace{0.5em}
  \begin{tcolorbox}\centering\small
   
    Funded by the European Union. Views and opinions expressed are however those of the author(s) only and do not necessarily reflect those of the European Union or the European Research Council Executive Agency. Neither the European Union nor the granting authority can be held responsible for them. This project has received funding from the European Research Council (ERC) under the European Union’s Horizon Europe research and innovation programme (grant agreement No. 101198055, project acronym NEITALG).
    
  \end{tcolorbox}
\end{center}

\subsection*{Conflict of Interest} The authors declare no conflict of interest.

\bibliographystyle{abbrv}
\bibliography{OT}

@book{HardyLittlewoodPolya1952,
  author    = {Hardy, G. H. and Littlewood, J. E. and P{\'o}lya, G.},
  title     = {Inequalities},
  edition   = {2},
  publisher = {Cambridge University Press},
  address   = {Cambridge},
  year      = {1952}
}

@article{AmGlaTre,
	author = {Ambrosio, Luigi and Glaudo, Federico and Trevisan, Dario},
	doi = {10.3934/dcds.2019304},
	fjournal = {Discrete and Continuous Dynamical Systems. Series A},
	issn = {1078-0947},
	journal = {Discrete Contin. Dyn. Syst.},
	mrclass = {60D05 (35B35 35F21 49J55 49Q20 58J35)},
	mrnumber = {4026190},
	number = {12},
	pages = {7291--7308},
	title = {On the optimal map in the 2-dimensional random matching problem},
	url = {https://doi-org.accesdistant.sorbonne-universite.fr/10.3934/dcds.2019304},
	volume = {39},
	year = {2019}}

@article{AKT84,
	author = {M. {Ajtai} and J. {Koml\'os} and G. {Tusn\'ady}},
	doi = {10.1007/BF02579135},
	fjournal = {{Combinatorica}},
	issn = {0209-9683; 1439-6912/e},
	journal = {{Combinatorica}},
	msc2010 = {60D05 60G40},
	pages = {259--264},
	publisher = {Springer, Berlin/Heidelberg; J\'anos Bolyai Mathematical Society, Budapest},
	title = {{On optimal matchings.}},
	volume = {4},
	year = {1984},
	zbl = {0562.60012}}

@article{FoGu15,
	author = {Fournier, N. and Guillin, A.},
	doi = {10.1007/s00440-014-0583-7},
	fjournal = {Probability Theory and Related Fields},
	issn = {0178-8051},
	journal = {Probab. Theory Related Fields},
	mrclass = {60F25 (60E15 60F10)},
	mrreviewer = {Jos\'e Trashorras},
	number = {3-4},
	pages = {707--738},
	title = {On the rate of convergence in {W}asserstein distance of the empirical measure},
	url = {https://doi.org/10.1007/s00440-014-0583-7},
	volume = {162},
	year = {2015}}

@article{AmStTr16,
	author = {Ambrosio, L. and Stra, F. and Trevisan, D.},
	fjournal = {Probabability Theory and Related Fields},
	journal = {{Probab. Theory Relat. Fields}},
	number = {1-2},
	pages = {433--477},
	publisher = {Springer},
	title = {A {PDE} approach to a 2-dimensional matching problem},
	volume = {173},
	year = {2019}}

@book{VilOandN,
	author = {Villani, C.},
	doi = {10.1007/978-3-540-71050-9},
	isbn = {978-3-540-71049-3},
	pages = {xxii+973},
	publisher = {Springer-Verlag, Berlin},
	series = {Grundlehren der Mathematischen Wissenschaften},
	title = {Optimal transport: old and new},
	url = {http://dx.doi.org/10.1007/978-3-540-71050-9},
	volume = {338},
	year = {2009}}

@article{CaLuPaSi14,
	author = {Caracciolo, S. and Lucibello, C. and Parisi, G. and Sicuro, G.},
	journal = {Physical Review E},
	number = {1},
	publisher = {APS},
	title = {Scaling hypothesis for the {E}uclidean bipartite matching problem},
	volume = {90},
	year = {2014}}

@article{GO,
	title     = {A variational proof of partial regularity for optimal
               transportation maps},
  author    = {Michael Goldman and Felix Otto},
  journal   = {Ann. Sci. Ec. Norm. Super. (4)},
  publisher = {Soci\'et\'e Math\'ematique de France},
  volume    =  {53},
  number    =  {5},
  pages     = {1209--1233},
  year      =  {2020}
}

@article{GoldTrev24ran,
  author = {Michael Goldman and Dario Trevisan},
  title = {Optimal transport methods for combinatorial optimization over two random point sets},
  journal = {Probability Theory and Related Fields},
  year = {2024},
  volume = {188},
  number = {3},
  pages = {1315--1384},
  month = apr,
  issn = {1432-2064},
  doi = {10.1007/s00440-023-01245-1},
  url = {https://doi.org/10.1007/s00440-023-01245-1}
}

@article{caracciolo2018solution,
  title={Solution for a bipartite Euclidean traveling-salesman problem in one dimension},
  author={Caracciolo, Sergio and Di Gioacchino, Andrea and Gherardi, Marco and Malatesta, Enrico M},
  journal={Physical Review E},
  volume={97},
  number={5},
  pages={052109},
  year={2018},
  publisher={APS}
}

@article{capelli2018exact,
  title={Exact value for the average optimal cost of the bipartite traveling salesman and two-factor problems in two dimensions},
  author={Capelli, Riccardo and Caracciolo, Sergio and Di Gioacchino, Andrea and Malatesta, Enrico M},
  journal={Physical Review E},
  volume={98},
  number={3},
  pages={030101},
  year={2018},
  publisher={APS}
}

@book{BGL14,
  author    = {Bakry, Dominique and Gentil, Ivan and Ledoux, Michel},
  title     = {Analysis and Geometry of {M}arkov Diffusion Operators},
  publisher = {Springer},
  series    = {Grundlehren der mathematischen Wissenschaften},
  volume    = {348},
  year      = {2014},
  doi       = {10.1007/978-3-319-00227-9}
}

@article{Ta92,
  author  = {Talagrand, Michel},
  title   = {Matching Random Samples in Many Dimensions},
  journal = {The Annals of Applied Probability},
  year    = {1992},
  volume  = {2},
  number  = {4},
  pages   = {846--856},
  doi     = {10.1214/aoap/1177005578}
}

@article{BoLe21,
  author  = {Bobkov, Sergey G. and Ledoux, Michel},
  title   = {A simple {F}ourier analytic proof of the {A}jtai--{K}oml{\'o}s--{T}usn{\'a}dy optimal matching theorem},
  journal = {The Annals of Applied Probability},
  year    = {2021},
  volume  = {31},
  number  = {6},
  pages   = {2567--2584},
  doi     = {10.1214/20-AAP1656}
}

@article{dBGM99,
  author  = {del Barrio, Eustasio and Gin{\'e}, Evarist and Matr{\'a}n, Carlos},
  title   = {Central limit theorems for the {W}asserstein distance between the empirical and the true distributions},
  journal = {The Annals of Probability},
  year    = {1999},
  volume  = {27},
  number  = {2},
  pages   = {1009--1071},
  doi     = {10.1214/aop/1022677394}
}

@book{PenroseRGG03,
  author    = {Penrose, Mathew},
  title     = {Random Geometric Graphs},
  publisher = {Oxford University Press},
  year      = {2003},
  series    = {Oxford Studies in Probability},
  doi       = {10.1093/acprof:oso/9780198506263.001.0001}
}

@book{DevroyeLugosi01,
  author    = {Devroye, Luc and Lugosi, G{\'a}bor},
  title     = {Combinatorial Methods in Density Estimation},
  publisher = {Springer},
  year      = {2001},
  series    = {Springer Series in Statistics},
  doi       = {10.1007/978-1-4613-0121-9}
}

@book{Steele97,
  author    = {Steele, J. Michael},
  title     = {Probability Theory and Combinatorial Optimization},
  publisher = {SIAM},
  year      = {1997},
  series    = {CBMS-NSF Regional Conference Series in Applied Mathematics},
  doi       = {10.1137/1.9781611970069}
}

@book{Yukich98,
  author    = {Yukich, Joseph E.},
  title     = {Probability Theory of Classical Euclidean Optimization Problems},
  publisher = {Springer},
  year      = {1998},
  series    = {Lecture Notes in Mathematics},
  volume    = {1675},
  doi       = {10.1007/BFb0093421}
}

@article{BHH59,
  author  = {Beardwood, J. and Halton, J. H. and Hammersley, J. M.},
  title   = {The shortest path through many points},
  journal = {Proceedings of the Cambridge Philosophical Society},
  year    = {1959},
  volume  = {55},
  number  = {4},
  pages   = {299--327},
  doi     = {10.1017/S0305004100034095}
}

@article{Arora98,
  author  = {Arora, Sanjeev},
  title   = {Polynomial time approximation schemes for {E}uclidean traveling salesman and other geometric problems},
  journal = {Journal of the ACM},
  year    = {1998},
  volume  = {45},
  number  = {5},
  pages   = {753--782},
  doi     = {10.1145/290179.290180}
}

@article{Mitchell99,
  author  = {Mitchell, Joseph S. B.},
  title   = {Guillotine subdivisions approximate polygonal subdivisions: A simple {PTAS} for geometric {TSP}, $k$-{MST}, and related problems},
  journal = {SIAM Journal on Computing},
  year    = {1999},
  volume  = {28},
  number  = {4},
  pages   = {1298--1309},
  doi     = {10.1137/S0097539796309764}
}

@article{LinKernighan73,
  author  = {Lin, Shen and Kernighan, Brian W.},
  title   = {An effective heuristic algorithm for the traveling-salesman problem},
  journal = {Operations Research},
  year    = {1973},
  volume  = {21},
  number  = {2},
  pages   = {498--516},
  doi     = {10.1287/opre.21.2.498}
}

@article{Croes58,
  author  = {Croes, Georges A.},
  title   = {A method for solving traveling-salesman problems},
  journal = {Operations Research},
  year    = {1958},
  volume  = {6},
  number  = {6},
  pages   = {791--812},
  doi     = {10.1287/opre.6.6.791}
}

@article{MezardParisi88,
  author  = {M{\'e}zard, Marc and Parisi, Giorgio},
  title   = {The {E}uclidean matching problem},
  journal = {Journal de Physique},
  volume  = {49},
  pages   = {2019--2025},
  year    = {1988}
}

@book{Sicuro17thesis,
  author    = {Sicuro, Gabriele},
  title     = {The {E}uclidean Matching Problem},
  series    = {Springer Theses},
  publisher = {Springer},
  address   = {Cham},
  doi       = {10.1007/978-3-319-46577-7},
  year      = {2017}
}

@article{Aldous01,
  author  = {Aldous, David J.},
  title   = {The $\zeta(2)$ limit in the random assignment problem},
  journal = {Random Structures \& Algorithms},
  volume  = {18},
  number  = {4},
  pages   = {381--418},
  year    = {2001}
}

\end{document}